\definecolor{darkred}{rgb}{0.8,0,0}
\definecolor{OliveGreen}{cmyk}{0.64,0,0.95,0.40}
\newcommand{\R}{\hbox{\ensuremath{\mathbb{R}}}}
\theoremstyle{plain}
\newtheorem{theorem}{Theorem}
\newtheorem{proposition}{Proposition}
\newtheorem{lemma}{Lemma}
\newtheorem{corollary}{Corollary}
\theoremstyle{definition}
\newtheorem{definition}{Definition}
\theoremstyle{remark}
\newtheorem{remark}{Remark}
\def\Hisp{{\boldsymbol{\mathcal H}}}
\def\Id{\operatorname{Id}}
\begin{document}

\title{\textsc{Critical pairs of sequences of a Mixed Frame Potential}}

\author{Ivana Carrizo$^*$ and Sigrid Heineken$^{\dag}$}

\thanks{$^*$  NuHAG, Department of Mathematics, University of Vienna, Nordbergstrasse 15, A-1090 Vienna, Austria. E-mail: ivana.carrizo@univie.ac.at}
\thanks{$^{\dag}$ Departamento de Matem\'atica, Facultad de Ciencias Exactas y Naturales, Universidad de Buenos Aires, Pabell\'on I, Ciudad Universitaria, C1428EGA C.A.B.A., and IMAS, CONICET, Argentina. E-mail: sheinek@dm.uba.ar.\newline
{\em Correspondence to}: Sigrid Heineken, Departamento de
Matem\'atica, Facultad de Ciencias Exactas y Naturales,
Universidad de Buenos Aires, Pabell\'on I, Ciudad Universitaria,
C1428EGA C.A.B.A., Argentina, tel/fax:+541145763335, E-mail:
sheinek@dm.uba.ar, sigrid.heineken@gmail.com.}

\begin{abstract}

The classical frame potential in a finite dimensional Hilbert space has been introduced by Benedetto and Fickus, who showed that all finite unit-norm tight frames can be characterized as the minimizers of this energy functional. This was the start point of a series of new results in frame theory,  related to finding tight frames with determined length. The frame potential has been studied in the traditional setting  as well as in the finite-dimensional fusion frame context.
In this work we introduce the concept of {\sl mixed frame potential}, which generalizes the
notion of the Benedetto-Fickus frame potential. We study properties of this new potential, and give the structure of its critical pairs of sequences on a suitable restricted domain.
For a given sequence $\{\alpha_m\} _{m=1,...,N}$ in $K,$ where $K$ is $\mathbb{R}$ or $\mathbb{C},$  we obtain necessary and sufficient conditions in order to have a dual pair of frames $\{f_m\}_{m=1,...,N}$, $\{g_m\}_{m=1,...,N}$ such that $\langle f_m, g_m \rangle=\alpha_m$ for all $m=1,..., N.$

\medskip

{\bf Key words:} Finite frames, frame potential, dual frames, Lagrange multipliers.

\medskip

{\bf AMS subject classification:} Primary: 42C15, 42C99, 42C40.

\end{abstract}


\maketitle

\pagestyle{myheadings}
\markboth{IVANA CARRIZO AND SIGRID HEINEKEN}{CRITICAL PAIRS OF SEQUENCES OF A MIXED FRAME POTENTIAL}

\pagenumbering{arabic}

\section{Introduction}
Frames, which were introduced by Duffin and Schaeffer in \cite{duffschaef1}, became essential for engineering and applied mathematics, specially for the purpose of signal processing and data transmission. Given a Hilbert space $\mathbb{H},$ a sequence $\{f_m\}\subset \mathbb{H}$ is a {\sl frame} if there exist positive constants $A$ and $B$ that satisfy
$$A\|f\|^2\leq\sum_m|\langle f,f_m\rangle|^2 \leq
B\|f\|^2\,\,\,\,\,\forall f \in \mathbb{H}.$$
If $A=B$ it is called a {\sl tight frame}.

The main property of frames is that they provide reconstruction formulae where the coefficients are not necessarily unique, which is advantageous in situations that arise in signal processing \cite{BC95}. Particular frames such as wavelet and Gabor frames are described e.g. in \cite{HW89}, \cite{D90}, \cite{D92}, \cite{ch03}.

Finite frames are used in many applications, where we often have to
work in finite dimensional spaces, since they avoid the
approximation problems that come up by truncating infinite frames.
They have been studied for example in \cite{befi03}, \cite{ca04},
\cite{GKK01}. In particular, finite tight frames are very useful to
solve problems in Communication Theory, Information Theory, Sampling
Theory, etc. \cite{SH03}, since the convergence of the provided decompositions is fast.
The {\sl frame  potential} in $\mathbb{H}_d^N$ - where
$\mathbb{H}_d$ is a finite dimensional Hilbert space - introduced in
\cite{befi03} by Benedetto and Fickus- turned out to be an important
tool in frame theory. In our work we define a new concept of potential in
$\mathbb{H}_d^N\times \mathbb{H}_d^N.$  Whereas the Benedetto-Fickus
potential measures the orthogonality of a system of vectors, our
{\sl mixed frame potential} quantifies in some sense the
biorthogonality of two systems of vectors.

In \cite{ca04} and \cite{cafikoletr06}, the problem of finding tight frames with a
prescribed norm is analyzed, which is related to the minimization of
the Benedetto-Fickus frame potential. The Benedetto-Fickus frame potential has been also studied in the finite-dimensional
fusion frame setting \cite{CF09}, \cite{MRS10}.

Given a sequence $\{\alpha_m\} _{m=1,...,N},$  we study the mixed frame potential restricted to the pairs
$(\{f_m\}_{m=1}^N,\{g_m\}_{m=1}^N)$ such that $\langle f_m, g_m \rangle=\alpha_m,$ and describe the
critical pairs of sequences of this restricted potential. This turned out to be related to finding dual pairs of frames that satisfy $\langle f_m, g_m \rangle=\alpha_m.$

The paper is organized as follows. In the following section we give
definitions and preliminaries that we will use later. In section 3
we present some properties of the mixed frame potential. Section 4
is devoted to characterize the structure of the critical pairs of sequences of the
mixed frame potential, which leads to the result about necessary and
sufficient conditions for the existence of dual frames with prescribed scalar products.

\section{Notation and Preliminaries}

Let $K$ be $\mathbb{R}$ or $\mathbb{C}$ and $\mathbb{H}_d$ a $d$-dimensional Hilbert space over $K.$  Let $\{f_m\}_{m=1}^N$ and $\{g_m\}_{m=1}^N$ be sequences in $\mathbb{H}_d.$
The synthesis operator for $\{f_m\}_{m=1}^N$ is given by
 $$T:K^N\rightarrow\mathbb{H}_d,~~T(\{c_m\}_{m=1}^N)=\sum_{m=1}^N c_m f_m$$
and the analysis operator for $\{f_m\}_{m=1}^N$ by $$T^\star: \mathbb{H}_d\rightarrow K^N,~~ T^\star(f)=\{\langle f,f_m\rangle \}_{m=1}^N.$$
We will denote with $U$ and $U^\star$ the synthesis and respectively analysis operator of $\{g_m\}_{m=1}^N.$
We denominate $TU^\star$ and $UT^\star$ the \textit{mixed frame operators}:

For $f\in \mathbb{H}_d$ we have
\begin{equation}\label{dual}
TU^\star (f)= \sum_{m=1}^N\langle f,g_m\rangle  f_m,~~~~~~\hbox{and}~~~~~UT^\star(f)=\sum_{m=1}^N\langle f,f_m\rangle  g_m.
\end{equation}

Two sequences $\{f_m\}_{m=1}^N$ and $\{g_m\}_{m=1}^N$ are \textit{dual frames} if
\begin{equation}\label{dual_1}
f=\sum_{m=1}^N\langle f,g_m\rangle  f_m \,\,\,\forall f\in \mathbb{H}_d~~~~ \hbox{or}~~~~f=\sum_{m=1}^N\langle f,f_m\rangle  g_m\,\,\,\forall f\in \mathbb{H}_d.
\end{equation}
In terms of the operators $T$ and $U,$ \eqref{dual_1} means that $TU^\star=I$ or $UT^\star=I.$

\bigskip

\begin{definition}\label{def_frame_pot}
Let $\widetilde{FP}:\mathbb{H}_d^N\times\mathbb{H}_d^N\longrightarrow K,$
$$\widetilde{FP}(\{f_m\}_{m=1}^N,\{g_m\}_{m=1}^N)=\sum_{m=1}^N\sum_{n=1}^N \langle f_m,g_n\rangle \langle f_n,g_m\rangle.$$
We call $\widetilde{FP}$ the \textit{mixed frame potential} of $(\{f_m\}_{m=1}^N,\{g_m\}_{m=1}^N)\in\mathbb{H}_d^N\times\mathbb{H}_d^N.$
\end{definition}

Observe that for the case that $\{f_m\}_{m=1}^N=\{g_m\}_{m=1}^N,$ the mixed frame potential is equal to $$FP(\{f_m\}_{m=1}^N)=\sum_{m=1}^N\sum_{n=1}^N |\langle f_m,f_n\rangle|^2,$$
 which is the traditional Benedetto-Fickus frame potential of $\{f_m\}_{m=1}^N.$

Given a sequence $\{\alpha_m\}_{m=1}^N\subset K$ we define
$$\tilde{S}(\{\alpha_m\}_{m=1}^N)=\left\{(\{f_m\}_{m=1}^N,\{g_m\}_{m=1}^N)\in \mathbb{H}_d^N\times \mathbb{H}_d^N: \langle f_m,g_m\rangle =\alpha_m \,\,\forall m=1,...,N\right\}.$$

\section{Mixed Frame Potential}

We will see next that the mixed frame potential can also be written as the trace of the square of the corresponding mixed frame operator, i.e. it is the square of the Hilbert-Schmidt norm of the mixed frame operator.

\begin{lemma}\label{FramePotAutov}
For any pair $(\{f_m\}_{m=1}^N,\{g_m\}_{m=1}^N)\in \mathbb{H}_d^N\times\mathbb{H}_d^N$ with corresponding mixed frame operator $TU^\star,$
\begin{equation}
\begin{aligned}
\widetilde{FP}(\{f_m\}_{m=1}^N,\{g_m\}_{m=1}^N)&=Tr((TU^\star)^2)=\sum_{n=1}^d\lambda_n^2
\end{aligned}
\end{equation}
where $\{\lambda_n\}_{n=1}^d$ are the eigenvalues of $TU^\star.$
\end{lemma}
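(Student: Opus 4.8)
The plan is to exploit the rank-one decomposition of the mixed frame operator together with the behaviour of the trace on rank-one operators. From \eqref{dual} we can write $TU^\star=\sum_{m=1}^N R_m$, where $R_m$ is the rank-one operator $R_m(f)=\langle f,g_m\rangle f_m$. First I would record two elementary facts. (i) The composition of two such operators satisfies $R_mR_n(f)=\langle f,g_n\rangle\langle f_n,g_m\rangle\,f_m$, so $R_mR_n=\langle f_n,g_m\rangle\,R_{m,n}$ with $R_{m,n}(f)=\langle f,g_n\rangle f_m$. (ii) The trace of any operator of the form $f\mapsto\langle f,g\rangle h$ equals $\langle h,g\rangle$, as is seen by summing $\langle e_i,g\rangle\langle h,e_i\rangle$ over an orthonormal basis $\{e_i\}_{i=1}^d$ and invoking the reconstruction $h=\sum_i\langle h,e_i\rangle e_i$. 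In particular $Tr(R_{m,n})=\langle f_m,g_n\rangle$.

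With these in hand, the second step is to expand $(TU^\star)^2=\sum_{m=1}^N\sum_{n=1}^N R_mR_n$ and take the trace term by term. Using (i) and (ii),
$$Tr\big((TU^\star)^2\big)=\sum_{m=1}^N\sum_{n=1}^N \langle f_n,g_m\rangle\,Tr(R_{m,n})=\sum_{m=1}^N\sum_{n=1}^N\langle f_n,g_m\rangle\langle f_m,g_n\rangle,$$
which is precisely $\widetilde{FP}(\{f_m\}_{m=1}^N,\{g_m\}_{m=1}^N)$ after swapping the two factors and relabelling $m\leftrightarrow n$. Equivalently, one may invoke the cyclicity of the trace to replace $Tr((TU^\star)^2)$ by $Tr((U^\star T)^2)$, identify the $N\times N$ cross-Gram operator $U^\star T$ as the matrix with entries $\langle f_m,g_n\rangle$, and read off the same double sum from $Tr(G^2)=\sum_{m,n}G_{n,m}G_{m,n}$.

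For the last equality, $Tr((TU^\star)^2)=\sum_{n=1}^d\lambda_n^2$ is the standard fact that the trace of the square of an operator on a $d$-dimensional space equals the sum of the squares of its eigenvalues, counted with algebraic multiplicity; this follows from Schur triangularization (or the Jordan form), since the trace of a power is conjugation-invariant and depends only on the diagonal entries of a triangular form.

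The proof carries no serious analytic obstacle; the care required is in the bookkeeping. One must track which slot of $\langle\cdot,\cdot\rangle$ is conjugate-linear so that the indices in the double sum land correctly, and, when $K=\mathbb{R}$, apply the eigenvalue identity to the complexification of $TU^\star$ (its non-real eigenvalues occur in conjugate pairs, so $\sum_n\lambda_n^2$ stays in $K$). It is worth noting that $TU^\star$ need not be self-adjoint once $\{f_m\}_{m=1}^N\neq\{g_m\}_{m=1}^N$—its adjoint is the companion mixed frame operator $UT^\star$—but since the eigenvalue identity holds for arbitrary operators, self-adjointness is never used.
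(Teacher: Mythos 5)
Your proof is correct, but it takes a genuinely different route from the paper's. The paper works from the definition of $\widetilde{FP}$: it inserts the orthonormal expansion $f_m=\sum_{l}\langle f_m,e_l\rangle e_l$ into the double sum, reorganizes it as $\sum_{l}\langle TU^\star e_l, UT^\star e_l\rangle$, and uses $(UT^\star)^\star=TU^\star$ to recognize this as $Tr((TU^\star)^2)$; the eigenvalue identity is then quoted without further justification. You instead start from the trace side: you decompose $TU^\star=\sum_m R_m$ into rank-one operators, compute $R_mR_n$ and the trace of a rank-one operator, and expand $Tr((TU^\star)^2)$ term by term into the double sum defining $\widetilde{FP}$; your computations ($R_mR_n=\langle f_n,g_m\rangle R_{m,n}$, $Tr(R_{m,n})=\langle f_m,g_n\rangle$) are correct, and your alternative via cyclicity, $Tr((TU^\star)^2)=Tr((U^\star T)^2)$ with the cross-Gram matrix $(U^\star T)_{m,n}=\langle f_n,g_m\rangle$, is arguably the cleanest route of all. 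The two arguments are of comparable length; yours is more modular and reusable (the rank-one trace lemma and the Gram-matrix identification are useful elsewhere), while the paper's is a single self-contained chain of scalar identities requiring no operator algebra. On the final step your treatment is more careful than the paper's: you justify $Tr(S^2)=\sum_{n}\lambda_n^2$ via Schur triangularization with algebraic multiplicities and note the need to complexify when $K=\R$, points the paper passes over silently --- and since $TU^\star$ is not self-adjoint in general (its adjoint is $UT^\star$), this care is warranted.
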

\begin{proof}
Let $\{e_n\}_{n=1}^d$ be an orthonormal basis of $\mathbb{H}_d.$
\begin{equation*}
\begin{aligned}
    \widetilde{FP}(\{f_m\}_{m=1}^N,\{g_m\}_{m=1}^N) & = \sum_{m=1}^N\sum_{n=1}^N \langle f_m,g_n\rangle \langle  f_n,g_m\rangle \\
 &  =\sum_{m=1}^N\sum_{n=1}^N\langle \sum_{l=1}^d \langle f_m,e_l\rangle e_l,g_n\rangle \langle f_n,g_m\rangle \\
 &= \sum_{m=1}^N \sum_{n=1}^N \sum_{l=1}^d  \langle f_m,e_l\rangle \langle e_l,g_n\rangle\langle f_n,g_m\rangle\\
 &= \sum_{l=1}^d \langle  \sum_{n=1}^N \langle e_l,g_n\rangle f_n , \sum_{m=1}^N\langle e_l,f_m\rangle g_m \rangle \\
 &= \sum_{l=1}^d \langle  TU^\star e_l, UT^\star e_l\rangle = \sum_{l=1}^d \langle (UT^\star)^\star TU^\star e_l, e_l\rangle \\
 &= \sum_{l=1}^d \langle (TU^\star)^2 e_l, e_l\rangle  = Tr((TU^\star)^2)
\end{aligned}
\end{equation*}
Let $\{\lambda_n\}_{n=1}^d$ denote the eigenvalues of $TU^\star,$ counting multiplicities. Since the eigenvalues of $(TU^\star)^2$ are $\{\lambda_n^2\}_{n=1}^d$ we have that
$$Tr((TU^\star)^2)=\sum_{n=1}^d\lambda_n^2.$$
\end{proof}

\begin{remark}
Observe that $$\widetilde{FP}(\{g_m\}_{m=1}^N,\{f_m\}_{m=1}^N)=Tr((U^\star T)^2)=\sum_{n=1}^d\overline{\lambda_n}^2.$$
\end{remark}

Note that the previous result allows to compute the mixed frame potential very easily for example for a pair $(\{f_m\}_{m=1}^N,\{g_m\}_{m=1}^N)$ such that $TU^*=A\Id$ with $A\in K.$  In this case, $\widetilde{FP}(\{f_m\}_{m=1}^N,\{g_m\}_{m=1}^N)=A^2d.$

Also, the previous representation of the mixed frame potential allows us to study in more detail some of its properties, as we will see in the following proposition.

\begin{proposition}\label{cotasfp}
Let $\{\alpha_m\}_{m=1}^N\subset K$ and $(\{f_m\}_{m=1}^N,\{g_m\}_{m=1}^N)\in \tilde{S}(\{\alpha_m\}_{m=1}^N).$
\begin{enumerate}
\item If all the eigenvalues of $TU^\star$ are real, then $\widetilde{FP}(\{f_m\}_{m=1}^N,\{g_m\}_{m=1}^N)$ and $\sum_{m=1}^N\alpha_m$ are real and $$\widetilde{FP}(\{f_m\}_{m=1}^N,\{g_m\}_{m=1}^N)\ge \frac{1}{d}\left(\sum_{m=1}^N\alpha_m\right)^2.$$

\item If all the eigenvalues of $TU^\star$ are imaginary, then $\widetilde{FP}(\{f_m\}_{m=1}^N,\{g_m\}_{m=1}^N)$ is real and $\sum_{m=1}^N\alpha_m$ is imaginary and
    $$\widetilde{FP}(\{f_m\}_{m=1}^N,\{g_m\}_{m=1}^N)\le \frac{1}{d}\left(\sum_{m=1}^N\alpha_m\right)^2$$

\item If $TU^*$ has only one eigenvalue, then $$\widetilde{FP}(\{f_m\}_{m=1}^N,\{g_m\}_{m=1}^N)=\frac{1}{d}\left(\sum_{m=1}^N\alpha_m\right)^2.$$ In particular, this happens if $TU^*=\frac{1}{d}\left(\sum_{m=1}^N\alpha_m\right) \Id.$
\end{enumerate}
\end{proposition}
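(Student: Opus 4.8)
The plan is to reduce all three statements to two elementary identities about the eigenvalues $\{\lambda_n\}_{n=1}^d$ of $TU^\star$ (counted with multiplicity):
$$\sum_{n=1}^d \lambda_n = \sum_{m=1}^N \alpha_m \qquad\text{and}\qquad \sum_{n=1}^d \lambda_n^2 = \widetilde{FP}(\{f_m\}_{m=1}^N,\{g_m\}_{m=1}^N).$$
The second identity is exactly Lemma~\ref{FramePotAutov}, so the only new ingredient is the first: that $Tr(TU^\star)=\sum_m\alpha_m$. First I would compute $Tr(TU^\star)$ in an orthonormal basis $\{e_l\}_{l=1}^d$ as in the proof of Lemma~\ref{FramePotAutov}, obtaining $Tr(TU^\star)=\sum_{l=1}^d\sum_{m=1}^N\langle e_l,g_m\rangle\langle f_m,e_l\rangle=\sum_{m=1}^N\langle f_m,g_m\rangle=\sum_{m=1}^N\alpha_m$, where the last step uses the resolution of the identity $\sum_l\langle f_m,e_l\rangle\langle e_l,g_m\rangle=\langle f_m,g_m\rangle$ and the defining constraint $\langle f_m,g_m\rangle=\alpha_m$ of $\tilde S(\{\alpha_m\}_{m=1}^N)$.

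Writing $S:=\sum_{m=1}^N\alpha_m=\sum_n\lambda_n$ and $\widetilde{FP}=\sum_n\lambda_n^2$, each part becomes a one-line inequality. For (1), if all $\lambda_n$ are real then both $S$ and $\widetilde{FP}$ are real, and the Cauchy--Schwarz inequality $\big(\sum_n\lambda_n\cdot 1\big)^2\le\big(\sum_n\lambda_n^2\big)\big(\sum_n 1\big)$ gives $S^2\le d\,\widetilde{FP}$, which is the asserted bound.

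For (2), I would write $\lambda_n=i\mu_n$ with $\mu_n\in\mathbb{R}$; then $\widetilde{FP}=\sum_n(i\mu_n)^2=-\sum_n\mu_n^2$ is real while $S=i\sum_n\mu_n$ is purely imaginary, so $S^2=-\big(\sum_n\mu_n\big)^2$. Applying Cauchy--Schwarz to $\{\mu_n\}$ yields $\big(\sum_n\mu_n\big)^2\le d\sum_n\mu_n^2$, and negating reverses this into $S^2\ge -d\sum_n\mu_n^2=d\,\widetilde{FP}$, i.e. $\widetilde{FP}\le\frac1d S^2$. For (3), a single eigenvalue means $\lambda_1=\cdots=\lambda_d=\lambda$; the trace identity forces $\lambda=S/d$, and then $\widetilde{FP}=d\lambda^2=S^2/d$, the equality case. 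The particular claim is immediate since $TU^\star=\frac1d S\,\Id$ has $S/d$ as its only eigenvalue.

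The computations are all routine; the only point requiring care is the sign bookkeeping in part (2), where squaring the purely imaginary quantities flips both $\widetilde{FP}$ and $S^2$ to the negatives of their counterparts in (1), so that the same Cauchy--Schwarz inequality producing a lower bound in (1) produces an upper bound in (2). There is no genuine obstacle beyond establishing the trace identity $Tr(TU^\star)=\sum_m\alpha_m$, which is precisely what ties the spectral description of Lemma~\ref{FramePotAutov} to the constraint defining $\tilde S(\{\alpha_m\}_{m=1}^N)$.
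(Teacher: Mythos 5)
Your proof is correct, and it shares with the paper's proof the same two foundations: the identity $\widetilde{FP}=\sum_{n=1}^d\lambda_n^2$ from Lemma~\ref{FramePotAutov}, and the trace computation $\sum_{n=1}^d\lambda_n=Tr(TU^\star)=\sum_{m=1}^N\alpha_m$, which you carry out exactly as the paper does. Where you genuinely diverge is in how the inequalities are extracted from these identities. The paper sets up a constrained optimization problem: it studies the functions $\mathcal{R}$ and $\mathcal{I}$ (real and imaginary parts of $\sum_n\lambda_n^2$) on the affine constraint set $\Lambda\subset\mathbb{C}^d$ determined by the trace, finds via Lagrange multipliers that the only critical point has $\lambda_1=\cdots=\lambda_d=\frac1d\sum_m\alpha_m$, and then classifies that point as a minimum, maximum, or saddle depending on whether the eigenvalues are real or imaginary --- a classification it only sketches (``it can be seen that''), and which requires care since $\Lambda$ is non-compact and the point in question is a saddle of $\mathcal{R}$ on the full set $\Lambda$, becoming an extremum only after restricting to the real (resp.\ imaginary) slice. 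Your Cauchy--Schwarz argument, $\bigl(\sum_n\lambda_n\bigr)^2\le d\sum_n\lambda_n^2$ for real $\lambda_n$ (applied to $\{\mu_n\}$ with $\lambda_n=i\mu_n$ in case (2), where the sign flip correctly reverses the inequality), reaches the same bounds in one line, is fully rigorous, and moreover yields the equality case for free: equality holds iff all eigenvalues coincide, which is precisely the fact the paper later invokes from this proof in the corollary on dual frames ($\widetilde{FP}=\frac1d(\sum_m\alpha_m)^2$ forces a single eigenvalue). What the paper's heavier route buys is thematic consistency --- the Lagrange-multiplier, critical-point viewpoint is the engine of Proposition~\ref{lexs} and Theorem~\ref{labedett} --- but for this particular proposition your elementary argument is cleaner and arguably closes a gap the paper leaves open.
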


\begin{proof}

By the preceding lemma we know that if $(\{f_m\}_{m=1}^N,\{g_m\}_{m=1}^N)\in \mathbb{H}_d^N\times\mathbb{H}_d^N,$
\begin{equation}\label{ecua_estrella}
\begin{aligned}
\widetilde{FP}(\{f_m\}_{m=1}^N,\{g_m\}_{m=1}^N)&=\sum_{n=1}^d\lambda_n^2\\
&=\sum_{n=1}^d \left((Re(\lambda_n))^2-(Im(\lambda_n))^2\right)+ 2i\;\sum_{n=1}^d Re(\lambda_n) Im(\lambda_n)
\end{aligned}
\end{equation}
where $\{\lambda_n\}_{n=1}^d$ are the eigenvalues of $TU^\star.$

Let $\{e_n\}_{n=1}^d$ be an orthonormal basis for $\mathbb{H}_d.$ If $(\{f_m\}_{m=1}^N,\{g_m\}_{m=1}^N)\in \tilde{S}(\{\alpha_m\}_{m=1}^N) $ the trace of the mixed frame operator satisfies
\begin{equation*}
\begin{aligned}
\sum_{n=1}^d\lambda_n &= Tr(TU^\star)=\sum_{n=1}^d \langle TU^\star e_n,e_n\rangle =\sum_{n=1}^d \langle \sum_{m=1}^N\langle e_n,g_m\rangle f_m,e_n\rangle \\
&= \sum_{n=1}^d \sum_{m=1}^N \langle e_n,g_m\rangle  \langle f_m,e_n\rangle = \sum_{m=1}^N \sum_{n=1}^d \langle e_n,g_m\rangle  \langle f_m,e_n\rangle \\
&= \sum_{m=1}^N \langle f_m, g_m\rangle =\sum_{m=1}^N\alpha_m .
\end{aligned}
\end{equation*}

So, in order to study possible extrema for the real or the imaginary part of $\widetilde{FP}:\tilde{S}(\{\alpha_m\}_{m=1}^N)\longrightarrow K,$ we will first consider the critical points of the functions
\begin{equation*}
  \mathcal{R}(\lambda_1,..., \lambda_d)=\mathcal{R}\left( Re(\lambda_1),...,Re(\lambda_d),Im(\lambda_1),...,Im(\lambda_d)\right)=\sum_{n=1}^d (Re(\lambda_n))^2-(Im(\lambda_n))^2
 \end{equation*}
and
 \begin{equation*}
  \mathcal{I}(\lambda_1,..., \lambda_d)=\mathcal{I}\left(Re(\lambda_1),...,Re(\lambda_d),Im(\lambda_1),...,Im(\lambda_d)\right)=2\sum_{n=1}^d Re(\lambda_n) Im(\lambda_n)
 \end{equation*}
restricted to the set $\Lambda\subset\mathbb{C}^d\simeq \R^{2d}$, where $(\lambda_1,..., \lambda_d)\in\Lambda$ if and only if
\begin{equation*}
\sum_{n=1}^d Re(\lambda_n)=Re\left(\sum_{m=1}^N\alpha_m\right)\,\,\,\text{ and } \,\,\,
\sum_{n=1}^d Im(\lambda_n)=Im\left(\sum_{m=1}^N\alpha_m\right).
\end{equation*}

Using Lagrange multipliers for this constrained problem, we obtain that if

$(\lambda_1,..., \lambda_d)$ is a critical point of $\mathcal{R}$ or $\mathcal{I}$ restricted to $\Lambda,$ then
$$\lambda_1=\lambda_2= ... =\lambda_d=\frac{1}{d}\sum_{m=1}^N \alpha_m.$$
Furthermore, in this case it can be seen that
\begin{enumerate}
\item[(i)]  if $Im(\lambda_1,..., \lambda_d)=0$ then $(\lambda_1,..., \lambda_d)$ is a minimum of $\mathcal{R}$ restricted to $\Lambda$ and $\mathcal{I}(\lambda_1,..., \lambda_d)=0,$
\item[(ii)] if $Re(\lambda_1,..., \lambda_d)=0$ then $(\lambda_1,..., \lambda_d)$ is a maximum of $\mathcal{R}$ restricted to  $\Lambda$ and $\mathcal{I}(\lambda_1,..., \lambda_d)=0,$
\item[(iii)]if $Re(\lambda_1,..., \lambda_d)\neq 0$ and $Im(\lambda_1,..., \lambda_d)\neq 0,$ then $(\lambda_1,..., \lambda_d)$ is a saddle point of $\mathcal{R}$ as well as of $\mathcal{I}$ restricted to $\Lambda.$
\end{enumerate}

Thus, for any $(\{f_m\}_{m=1}^N,\{g_m\}_{m=1}^N)\in \tilde{S}(\{\alpha_m\}_{m=1}^N)$ such that all the eigenvalues of $TU^\star$ are real, we have that $\widetilde{FP}(\{f_m\}_{m=1}^N,\{g_m\}_{m=1}^N)$ and $\sum_{m=1}^N\alpha_m$ are real and
$$\widetilde{FP}(\{f_m\}_{m=1}^N,\{g_m\}_{m=1}^N)=\sum_{n=1}^d\lambda_n^2\ge \frac{1}{d}\left(\sum_{m=1}^N\alpha_m)\right)^2,$$
and for any $(\{f_m\}_{m=1}^N,\{g_m\}_{m=1}^N)\in \tilde{S}(\{\alpha_m\}_{m=1}^N)$ such that all the eigenvalues of $TU^\star$ are imaginary,
$\widetilde{FP}(\{f_m\}_{m=1}^N,\{g_m\}_{m=1}^N)$ is real and $\sum_{m=1}^N\alpha_m$ is imaginary and
 $$Im\left(\widetilde{FP}(\{f_m\}_{m=1}^N,\{g_m\}_{m=1}^N)\right)=\sum_{n=1}^d\lambda_n^2\le \frac{1}{d}\left(\sum_{m=1}^N\alpha_m\right)^2.$$

 If $TU^\star$ has only one eigenvalue $\lambda$, then $\lambda=\frac{1}{d}\left(\sum_{m=1}^N\alpha_m\right)$  since $\lambda\in \Lambda,$ and so
 $\widetilde{FP}(\{f_m\}_{m=1}^N,\{g_m\}_{m=1}^N)=\sum_{n=1}^d\lambda_n^2=\frac{1}{d}\left(\sum_{m=1}^N\alpha_m\right)^2.$
\end{proof}

\begin{remark}
Note that the bounds in (1) and (2) of Proposition~\ref{cotasfp} are not necessarily achieved, but are attained when $TU^*$ has only one eigenvalue.
\end{remark}

Our next step is to study critical pairs of sequences of our mixed frame potential.

\section{Critical pairs of sequences of the mixed frame potential}

We show now that if the mixed frame operator is the identity operator times a constant, then the sequence $\{\alpha_m\} _{m=1,...,N}$ satisfies an equality.
\begin{proposition}\label{cotas}
Let $(\{f_m\}_{m=1}^N,\{g_m\}_{m=1}^N) \in \tilde{S}(\{\alpha_m\}_{m=1}^N$ be such that $TU^\star =A \Id $ with $A\in K.$ Then $\frac{1}{d}\sum_{i=1}^N\alpha_i=A.$
\end{proposition}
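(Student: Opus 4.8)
The plan is to compute the trace of the mixed frame operator $TU^\star$ in two different ways and compare the results. The hypothesis $(\{f_m\}_{m=1}^N,\{g_m\}_{m=1}^N)\in \tilde{S}(\{\alpha_m\}_{m=1}^N)$ encodes the diagonal scalar product information $\langle f_m,g_m\rangle=\alpha_m$, and the trace is precisely the quantity that extracts this diagonal information from the operator.

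First I would recall the trace identity already established in the proof of Proposition~\ref{cotasfp}. There, fixing an orthonormal basis $\{e_n\}_{n=1}^d$ of $\mathbb{H}_d$ and expanding $\langle TU^\star e_n,e_n\rangle$ using the representation of $TU^\star$ in \eqref{dual}, one obtains
\begin{equation*}
Tr(TU^\star)=\sum_{n=1}^d\langle TU^\star e_n,e_n\rangle=\sum_{m=1}^N\langle f_m,g_m\rangle=\sum_{m=1}^N\alpha_m,
\end{equation*}
where the last equality uses the defining condition of $\tilde{S}(\{\alpha_m\}_{m=1}^N)$. This gives the first evaluation of the trace, valid for any admissible pair.

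Next I would use the standing hypothesis $TU^\star=A\,\Id$ to compute the same trace a second way. Since $\mathbb{H}_d$ is $d$-dimensional, the identity operator has trace $d$, so $Tr(TU^\star)=Tr(A\,\Id)=A\,d$. Equating the two expressions for $Tr(TU^\star)$ yields $A\,d=\sum_{m=1}^N\alpha_m$, and dividing by $d$ gives the claimed equality $\frac{1}{d}\sum_{m=1}^N\alpha_m=A$.

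I do not expect any genuine obstacle here: the statement is essentially a direct consequence of the additivity and basis-independence of the trace, together with the trace formula for $TU^\star$. The only point requiring mild care is to invoke the trace computation from the previous proof correctly, ensuring that the diagonal condition $\langle f_m,g_m\rangle=\alpha_m$ is exactly what reduces the double identity $Tr(TU^\star)=\sum_m\langle f_m,g_m\rangle$ to $\sum_m\alpha_m$.
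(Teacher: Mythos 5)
Your proof is correct and follows essentially the same route as the paper's: both evaluate $\sum_{m=1}^N\langle f_m,g_m\rangle$ against an orthonormal basis and use $TU^\star=A\,\Id$ to conclude that this sum equals $Ad$. The paper merely carries out this trace computation inline rather than citing the identity $Tr(TU^\star)=\sum_{m=1}^N\alpha_m$ already established in the proof of Proposition~\ref{cotasfp}, so the difference is purely one of presentation.
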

\begin{proof}
Let $\{e_n\}_{n=1}^d$ be an orthonormal basis in $\mathbb{H}_d.$ Since $A \Id =TU^\star$ we have that
\begin{equation*}
\begin{aligned}
\frac{1}{d}\sum_{m=1}^N\alpha_m&=\frac{1}{d}\sum_{m=1}^N\langle f_m,g_m\rangle =\frac{1}{d}\sum_{m=1}^N\langle \sum_{j=1}^d\langle f_m, e_j\rangle e_j, g_m\rangle \\
&=\frac{1}{d}\sum_{m=1}^N\sum_{j=1}^d\langle f_m, e_j\rangle \langle e_j, g_m\rangle =\frac{1}{d}\sum_{j=1}^d A\langle e_j,e_j\rangle =A
\end{aligned}
\end{equation*}
\end{proof}

\bigskip
In order to state the following results we will need some definitions.

Let $\mathcal{L}$ be a finite index set.
\begin{definition}
 We call $\{f_m\}_{m\in \mathcal{L}}\subset \mathbb{H}_d$ and $\{g_m\}_{m\in \mathcal{L}}\subset \mathbb{H}_d$
 {\em generalized biorthogonal sequences} if there exists $\{\alpha_m\}_{m\in \mathcal{L}}\subset K_{\neq 0}$ such that
\begin{equation}
\left\{
  \begin{array}{ll}
   \langle f_n,g_m\rangle =0, & \hbox{for all}~~ n\neq m; \\
    \langle f_m,g_m\rangle =\alpha_m, & \hbox{for all}~~ m\in \mathcal{L}.
  \end{array}
\right.
\end{equation}
\end{definition}

\bigskip

\begin{definition}
Let $A\in K.$ We say $\{f_m\}_{m\in \mathcal{L}}\subset \mathbb{H}_d$  and $\{g_m\}_{m\in \mathcal{L}}\subset \mathbb{H}_d$ are $A$-{\em generalized dual frames} if
\begin{equation}
  \left\{
    \begin{array}{ll}
      \displaystyle\sum_{m\in \mathcal{L}}\langle f,g_m\rangle  f_m=A f , & \hbox{for all}~~f\in span\{f_m\}_{m\in \mathcal{L}}~~\hbox{and}  \\
      \displaystyle\sum_{m\in \mathcal{L}}\langle f,f_m\rangle  g_m=\overline{A} f, & \hbox{for all}~~f\in span\{g_m\}_{m\in \mathcal{L}}.
    \end{array}
  \right.
\end{equation}
\end{definition}

In the following we will see that the critical points of the real or the imaginary part of the restricted mixed frame potential satisfy certain Lagrange equations.
\begin{proposition}\label{lexs}
Let $\{\alpha_n\}_{m=1}^N\subset K.$ If
$(\{f_m\}_{m=1}^N,\{g_m\}_{m=1}^N)$ is a local extrema or a saddle
point of the real or the imaginary part of the mixed frame potential
$\widetilde{FP}:\tilde{S}(\{\alpha_m\}_{m=1}^N)\longrightarrow K,$
then for each $m=1,...,N$ there exists $c\in K$ such that
\begin{equation}\label{cri}
\sum_{n=1,n\neq m}^N \langle f_m,g_n\rangle f_n= c f_m \text{ and }
\sum_{n=1,n\neq m}^N  \langle g_m,f_n\rangle g_n=\overline{c}g_m
\end{equation}
\end{proposition}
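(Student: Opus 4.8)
The plan is to read \eqref{cri} as the first–order Lagrange conditions for the constrained functional and to produce the required gradients by a direct variation. First I would fix an index $k$ and differentiate $\widetilde{FP}$ along the perturbation $f_k\mapsto f_k+th$ (with $t\in\mathbb{R}$, $h\in\mathbb{H}_d$, and all other vectors frozen). Only the summands of the double sum whose first or second index equals $k$ move, and since each factor $\langle f_m,g_n\rangle$ is linear in its $f$–entry one gets
\[
\left.\frac{d}{dt}\right|_{t=0}\widetilde{FP}
=2\sum_{n=1}^N\langle f_n,g_k\rangle\,\langle h,g_n\rangle
=2\Big\langle h,\ \sum_{n=1}^N\langle g_k,f_n\rangle\,g_n\Big\rangle .
\]
Repeating the computation for $g_k\mapsto g_k+th$ yields, by the same bookkeeping,
\[
\left.\frac{d}{dt}\right|_{t=0}\widetilde{FP}
=2\Big\langle \sum_{n=1}^N\langle f_k,g_n\rangle\,f_n,\ h\Big\rangle .
\]
Thus the ``$f_k$–gradient'' of $\widetilde{FP}$ is $w_k:=\sum_n\langle g_k,f_n\rangle g_n$ and its ``$g_k$–gradient'' is $v_k:=\sum_n\langle f_k,g_n\rangle f_n$, which already contain the two sums appearing in \eqref{cri}.

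Next I would describe $\tilde S(\{\alpha_m\}_{m=1}^N)$ as the zero set of the $K$–valued maps $G_m=\langle f_m,g_m\rangle-\alpha_m$, whose $f_k$– and $g_k$–differentials are $h\mapsto\langle h,g_k\rangle$ and $h\mapsto\langle f_k,h\rangle$ (and vanish for $m\neq k$). Because $\widetilde{FP}$ and the $G_m$ take values in $K$, I would apply the real Lagrange multiplier theorem to the real–valued function $\Phi=Re(\widetilde{FP})$ (the case $\Phi=Im(\widetilde{FP})$ being analogous) subject to the $2N$ real constraints $Re(G_m)=0$, $Im(G_m)=0$. At a critical point — a local extremum or a saddle of $\Phi$ restricted to $\tilde S$ — there are real numbers $a_m,b_m$ with $d\Phi=\sum_m a_m\,d(Re(G_m))+\sum_m b_m\,d(Im(G_m))$ on the whole tangent space.

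Restricting this identity to $f_k$–variations (only the $m=k$ constraint survives) gives, for every $h\in\mathbb{H}_d$,
\[
2\,Re\langle h,w_k\rangle=a_k\,Re\langle h,g_k\rangle+b_k\,Im\langle h,g_k\rangle .
\]
The one genuinely useful identity is $a\,Re(u)+b\,Im(u)=Re\big((a-ib)u\big)$, which together with $(a_k-ib_k)\langle h,g_k\rangle=\langle h,(a_k+ib_k)g_k\rangle$ turns the right–hand side into $Re\langle h,(a_k+ib_k)g_k\rangle$. Since $Re\langle\cdot,\cdot\rangle$ is a non–degenerate real inner product, letting $h$ range over $\mathbb{H}_d$ forces $2w_k=(a_k+ib_k)g_k$; setting $\lambda_k:=\tfrac12(a_k+ib_k)\in K$ this reads $w_k=\lambda_k g_k$. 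The same argument applied to $g_k$–variations (where $h$ now sits in the second slot, which is why a conjugate appears) gives $2v_k=(a_k-ib_k)f_k$, i.e. $v_k=\overline{\lambda_k}f_k$.

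Finally I would split off the $n=k$ terms: from $v_k=\overline{\lambda_k}f_k$ and $\langle f_k,g_k\rangle=\alpha_k$ one obtains $\sum_{n\neq k}\langle f_k,g_n\rangle f_n=(\overline{\lambda_k}-\alpha_k)f_k$, and from $w_k=\lambda_k g_k$ together with $\langle g_k,f_k\rangle=\overline{\alpha_k}$ one obtains $\sum_{n\neq k}\langle g_k,f_n\rangle g_n=(\lambda_k-\overline{\alpha_k})g_k$; these are exactly \eqref{cri} with $c=\overline{\lambda_k}-\alpha_k$, the two equations being consistent because $\overline{c}=\lambda_k-\overline{\alpha_k}$. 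I expect the only real obstacle to be the complex bookkeeping rather than any analytic difficulty: one must apply the Lagrange theorem to $Re$/$Im$ of $\widetilde{FP}$ and then recombine the two real multipliers $a_k,b_k$ of each constraint into a single scalar $\lambda_k\in K$, while keeping track of the conjugate pairing between the $f_k$– and $g_k$–relations. One should also check the constraint qualification: the differentials $dG_m$ are independent wherever $(f_m,g_m)\neq(0,0)$, and at any index with $f_m=g_m=0$ (so $\alpha_m=0$) both sides of \eqref{cri} vanish, so that index is trivial.
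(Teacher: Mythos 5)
Your proposal is correct and takes essentially the same route as the paper: both read \eqref{cri} as the first-order Lagrange conditions on $\tilde{S}(\{\alpha_m\}_{m=1}^N)$, compute the gradients of the potential and of the constraints $\langle f_m,g_m\rangle=\alpha_m$, and recombine the two real multipliers of each constraint into a single scalar $c\in K$, with the conjugate pairing between the $f_m$- and $g_m$-equations emerging exactly as in the paper. The only differences are presentational: the paper first reduces to the $m$-th partial potential $\widetilde{FP}_m$ and works with gradients in real and imaginary coordinates, whereas you differentiate the full potential coordinate-free and also verify the constraint qualification (including the degenerate index case $f_m=g_m=0$), a point the paper passes over silently.
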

\begin{proof}
Consider the $m$-th  mixed frame potential denoted by $\widetilde{FP}_m,$ where
  $$\widetilde{FP}_m(f,g)= \langle f_m,g_m\rangle ^2+\sum_{n\neq m}\langle f_n,g\rangle \langle f,g_n\rangle + \widetilde{FP}(\{f_n\}_{n\neq m},\{g_n\}_{n\neq m}).$$
  Since $(\{f_m\}_{m=1}^N,\{g_m\}_{m=1}^N)$ is a local extrema or a saddle point of the real or the imaginary part of the frame potential
  $\widetilde{FP}$ restricted to $\tilde{S}(\{\alpha_m\}_{m=1}^N),$ we have that $(f_m,g_m)$ is a local extrema or a saddle point of the real or the imaginary part of $\widetilde{FP}_m$ in $S(\alpha_m)=\{(f,g)\in \Hisp \times\Hisp~~:~~ \langle f,g\rangle =\alpha_m \},$ where
  $$\widetilde{FP}_m(f,g)= \alpha_m^2+\sum_{n\neq m}\langle f_n,g\rangle \langle f,g_n\rangle + \sum_{n=1, n\neq m}^N\sum_{r=1, r\neq m}^N\langle f_n,g_r\rangle \langle g_r, f_n\rangle .$$
Hence, the corresponding several variable constrained problem must
be solved. Using Lagrange multipliers, it can be seen that there
exist $c_1,c_2\in\R$ such that
$$(I)\,\,\nabla Re(\widetilde{FP}_m) (f,g)|_{(f_m,g_m)}= c_1\nabla Re(\langle f,g\rangle )|_{(f_m,g_m)}+c_2\nabla Im(\langle f,g\rangle )|_{(f_m,g_m)}$$
or there exist $c_2,c_3\in R$ such that
$$(II)\,\nabla Im(\widetilde{FP}_m) (f,g)|_{(f_m,g_m)}= c_3\nabla Re(\langle f,g\rangle )|_{(f_m,g_m)}+c_4\nabla Im(\langle f,g\rangle )|_{(f_m,g_m)}.$$

\bigskip
From $(I)$ we have the following equations
\begin{enumerate}[(i)]
  \item $\nabla_{Re(f)}Re(\widetilde{FP}_m) (f,g)|_{(f_m,g_m)}= c_1\nabla_{Re(f)}Re(\langle f,g\rangle )|_{(f_m,g_m)}+c_2\nabla_{Re(f)}Im(\langle f,g\rangle )|_{(f_m,g_m)},$
  \item $\nabla_{Im(f)}Re(\widetilde{FP}_m) (f,g)|_{(f_m,g_m)}= c_1\nabla_{Im(f)}Re(\langle f,g\rangle )|_{(f_m,g_m)}+c_2\nabla_{Im(f)}Im(\langle f,g\rangle )|_{(f_m,g_m)},$
  
  \item $\nabla_{Re(g)}Re(\widetilde{FP}_m) (f,g)|_{(f_m,g_m)}= c_1\nabla_{Re(g)}Re(\langle f,g\rangle )|_{(f_m,g_m)}+c_2\nabla_{Re(g)}Im(\langle f,g\rangle )|_{(f_m,g_m)},$
  
  \item $\nabla_{Im(g)}Re(\widetilde{FP}_m) (f,g)|_{(f_m,g_m)}= c_1\nabla_{Im(g)}Re(\langle f,g\rangle )|_{(f_m,g_m)}+c_2\nabla_{Im(g)}Im(\langle f,g\rangle )|_{(f_m,g_m)},$
  
  \end{enumerate}
Hence, from (i) and (ii)
$$Re\left(\sum_{n=1,n\neq m}^N  \langle g_m,f_n\rangle g_n\right) =c_1Re(g_m)-c_2Im(g_m),$$
$$Im\left(\sum_{n=1,n\neq m}^N  \langle g_m,f_n\rangle g_n\right)=c_1Im(g_m)+c_2Re(g_m)$$
and from (iii) and (iv)
$$Re\left(\sum_{n=1,n\neq m}^N  \langle f_m,g_n\rangle f_n\right)=c_1 Re(f_m)+c_2Im(f_m),$$
$$Im\left(\sum_{n=1,n\neq m}^N  \langle f_m,g_n\rangle f_n\right)=c_1Im(f_m)-c_2Re(f_m),$$
which yields,
$$\sum_{n=1,n\neq m}^N  \langle g_m,f_n\rangle g_n=c_1 g_m+ic_2g_m=(c_1+ic_2)g_m$$
and
$$\sum_{n=1,n\neq m}^N  \langle f_m,g_n\rangle f_n=c_1 f_m-ic_2f_m=(c_1-ic_2)f_m,$$ so we obtain the desired result if we take $c=c_1+ic_2.$

\medskip

Observe that in a similar way we can obtain from $(II)$ that
$$\sum_{n=1,n\neq m}^N  \langle g_m,f_n\rangle g_n=(c_4-ic_3)g_m$$
and
$$\sum_{n=1,n\neq m}^N  \langle f_m,g_n\rangle f_n=(c_4+ic_3)f_m,$$
which implies in particular that if
$(\{f_m\}_{m=1}^N,\{g_m\}_{m=1}^N)$ is a local extrema or a saddle
point of the real and the imaginary part of the restricted mixed
frame potential, then $c_4=c_1$ and $c_3=-c_2.$
\end{proof}

\begin{definition}
Let $\{\alpha_m\}_{m=1}^N\subset K.$ We say that $(\{f_n\}_{m=1}^N,\{g_n\}_{m=1}^N)\in \tilde{S}(\{\alpha_m\}_{m=1}^N)$  is a {\em critical pair of sequences} if for each $m=1,...,N$ there exists $c\in K$ such that (\ref{cri}) is satisfied.
\end{definition}

Now we are ready to provide a structure of these critical pairs of sequences:

\begin{theorem}\label{critautovec}
Let $\{\alpha_m\}_{m=1}^N\subset K.$  If $(\{f_m\}_{m=1}^N,\{g_m\}_{m=1}^N)$ is a {\it critical pair of sequences}, then
\begin{enumerate}
\item for each $m\in\{1,...,N\},\,f_m$ is an eigenvector of $TU^\star$ and $g_m$ is an eigenvector of $UT^\star,$ and the corresponding eigenvalues are conjugates.
\item for $\{\lambda_j\}_{j=1}^J$ the sequence of distinct eigenvalues of $TU^\star,$ there exists a sequence of indexing sets $\{I_j \}_{j=1}^J$ with $\bigcup_{j=1}^J I_j =\{1,...,N\},$ such that $\{f_m\}_{m\in I_j}$ and $\{g_m\}_{m\in I_j}$ are $\lambda_j$-generalized dual frames.
\end{enumerate}
\end{theorem}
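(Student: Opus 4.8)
The plan is to prove part (1) by direct substitution of the critical pair equations \eqref{cri} into the expressions \eqref{dual} for the mixed frame operators, and then to build part (2) on top of it by a spectral argument. For part (1), fix $m$ and let $c_m\in K$ be the constant furnished by the critical pair condition. Using \eqref{dual} and $\langle f_m,g_m\rangle=\alpha_m$,
$$TU^\star f_m=\sum_{n=1}^N\langle f_m,g_n\rangle f_n=\alpha_m f_m+\sum_{n\neq m}\langle f_m,g_n\rangle f_n=\alpha_m f_m+c_m f_m,$$
so $f_m$ is an eigenvector of $TU^\star$ with eigenvalue $\mu_m:=\alpha_m+c_m$. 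Applying the second equation in \eqref{cri} in the same way and using $\langle g_m,f_m\rangle=\overline{\alpha_m}$, one gets $UT^\star g_m=\overline{\alpha_m}\,g_m+\overline{c_m}\,g_m=\overline{\mu_m}\,g_m$, so $g_m$ is an eigenvector of $UT^\star$ with the conjugate eigenvalue.

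For part (2), let $\{\lambda_j\}_{j=1}^J$ be the distinct eigenvalues of $TU^\star$ and set $I_j:=\{m:\mu_m=\lambda_j\}$. Since by part (1) each $\mu_m$ is an eigenvalue of $TU^\star$, every index $m$ lies in some $I_j$, which gives $\bigcup_{j=1}^J I_j=\{1,\dots,N\}$ as required. It then remains to verify the two $\lambda_j$-generalized dual frame identities on the relevant spans; by linearity it suffices to test them on the generators $f_k$ (resp.\ $g_k$) with $k\in I_j$.

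The crux, and the step I expect to be the main obstacle, is a cross-orthogonality relation: if $f_k$ and $f_m$ carry different eigenvalues then $\langle f_k,g_m\rangle=0$. I would prove this using the adjoint identity $(TU^\star)^\star=(U^\star)^\star T^\star=UT^\star$. For $k\in I_j$ and $m\in I_{j'}$ with $j\neq j'$, computing $\langle TU^\star f_k,g_m\rangle$ two ways yields, on one side, $\lambda_j\langle f_k,g_m\rangle$, and on the other, $\langle f_k,UT^\star g_m\rangle=\langle f_k,\overline{\lambda_{j'}}g_m\rangle=\lambda_{j'}\langle f_k,g_m\rangle$; hence $(\lambda_j-\lambda_{j'})\langle f_k,g_m\rangle=0$ and so $\langle f_k,g_m\rangle=0$. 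The delicate points here are keeping the conjugate-linearity of the inner product straight and confirming that the $\mu_m$ are genuinely the eigenvalues attached to each vector, so that the $I_j$ correctly partition the indices (zero vectors, which are degenerate, can be assigned arbitrarily since their contribution to every sum vanishes).

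With the cross-orthogonality in hand the conclusion follows by splitting the eigenvalue equation. For $k\in I_j$,
$$\lambda_j f_k=TU^\star f_k=\sum_{m\in I_j}\langle f_k,g_m\rangle f_m+\sum_{m\notin I_j}\langle f_k,g_m\rangle f_m,$$
and every term in the second sum vanishes by cross-orthogonality, giving $\sum_{m\in I_j}\langle f_k,g_m\rangle f_m=\lambda_j f_k$; extending linearly over $k\in I_j$ yields the first identity on $\operatorname{span}\{f_m\}_{m\in I_j}$. The symmetric computation with $UT^\star g_k=\overline{\lambda_j}\,g_k$ and the conjugate cross-orthogonality $\langle g_k,f_m\rangle=0$ gives $\sum_{m\in I_j}\langle f,f_m\rangle g_m=\overline{\lambda_j}\,f$ on $\operatorname{span}\{g_m\}_{m\in I_j}$, so the two families are $\lambda_j$-generalized dual frames.
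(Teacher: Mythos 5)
Your proof is correct and follows essentially the same route as the paper: part (1) by direct substitution of the critical-pair equations into $TU^\star f_m$ and $UT^\star g_m$, and part (2) by grouping indices according to the attached eigenvalue and using orthogonality between eigenvectors of $TU^\star$ and eigenvectors of $UT^\star$ belonging to distinct (conjugate pairs of) eigenvalues. The only difference is cosmetic: you prove the cross-orthogonality explicitly from the adjoint identity $(TU^\star)^\star=UT^\star$, whereas the paper invokes it as the known fact $R_i\perp L_j$ for $i\neq j$, so your write-up is in fact slightly more self-contained on that point.
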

\begin{proof}

\begin{enumerate}
\item Since
$(\{f_n\}_{m=1}^N,\{g_n\}_{m=1}^N)\in \tilde{S}(\{\alpha_m\}_{m=1}^N)$  is a critical pair of sequences, for $m\in {1,...,N}$ there exists $c \in K$ such that
\begin{equation}
\sum_{n=1,n\neq m}^N \langle f_m,g_n\rangle f_n= c f_m \text{ and }
\sum_{n=1,n\neq m}^N  \langle g_m,f_n\rangle g_n=\overline{c}g_m.
\end{equation}
So,
$$TU^\star f_m=\langle f_m, g_m\rangle f_m+\sum_{n=1,n\neq m}^N \langle f_m,g_n\rangle f_n=\alpha_m f_m +cf_m=(\alpha_m+c)f_m,$$
and
$$UT^*g_m=\sum_{n=1,n\neq m}^N \langle g_m ,f_n\rangle g_n=\langle g_m, f_m\rangle g_m+\sum_{n=1,n\neq m}^N \langle g_m ,f_n\rangle g_n=(\overline{\alpha_m}+\overline{c})g_m,$$
i.e. $f_m$ is an eigenvector of $TU^*$ and $g_m$ is an eigenvector of $UT^*$ and the eigenvalues are conjugates.

\medskip

\item

Let $\{\lambda_j\}_{j=1}^J$ be the sequence of distinct eigenvalues of $TU^\star.$
Since $(TU^\star)^\star=UT^\star,$ the eigenvalues of $UT^\star$ are the conjugates of the eigenvalues of $TU^\star.$ We call $\{R_j\}_{j=1}^J$  the set of all right
eigenvectors of $TU^\star,$ and $\{L_j\}_{j=1}^J$ the set of all left eigenvectors of $TU^\star,$ i.e. for each $j=1,...,J$ we have:
$$R_j=\{f\in \mathbb{H}_d ~~:~~TU^\star f=\lambda_j f\}=\{f\in \mathbb{H}_d~~:~~f^\star UT^\star=\overline{\lambda_j}f^\star\}$$
$$L_j=\{g\in \mathbb{H}_d ~~:~~g^\star TU^\star =\lambda_j g^\star\}=\{g\in \mathbb{H}_d~~:~~ UT^\star g=\overline{\lambda_j} g\}$$
We know that if $i\neq j$ then $R_i \perp L_j.$

Let  $\{I_j\}_{j=1}^J$ be the sequence of indexing sets given by $$I_j=\{m\in\{1,...,N\}:~~TU^\star f_m = \lambda_j f_m ~~\hbox{and}~~UT^\star g_m=\overline{\lambda_j} g_m\}.$$
Take $j\in\{1,...,J\}$ and $f\in R_j.$ If $m\notin I_j$ then $m\in I_i$ for some $i\neq j,$ hence $g_m\in L_i$ following that $\langle f,g_m\rangle =0.$ This yields
 $$\sum_{m\in I_j}\langle f,g_m\rangle  f_m= TU^*f=\lambda_j f.$$
 Analogously we obtain that for $f\in L_j$
 $$\sum_{m\in I_j}\langle f,f_m\rangle  g_m= UT^*f=\overline{\lambda_j} f,$$
 So, since $span\{f_m\}_{m\in I_j}\subseteq R_j,$ and $span\{g_m\}_{m\in I_j}\subseteq L_j,$ we have that
 \begin{equation}
  \left\{
    \begin{array}{ll}
      \displaystyle\sum_{m\in I_j}\langle f,g_m\rangle  f_m=TU^*f=\lambda_j f , & \hbox{for all}~~f\in span\{f_m\}_{m\in I_j}~~\hbox{and}  \\
      \displaystyle\sum_{m\in I_j}\langle f,f_m\rangle  g_m=UT^*f=\overline{\lambda_j} f, & \hbox{for all}~~f\in span\{g_m\}_{m\in I_j},
    \end{array}
  \right.
\end{equation}
i.e. $\{f_m\}_{m\in I_j}$ and $\{g_m\}_{m\in I_j}$ are $\lambda_j$-generalized dual frames.
Moreover, we proved that if $\lambda_j\neq 0$ then $span\{f_m\}_{m\in I_j}=R_j$ and $span\{g_m\}_{m\in I_j}=L_j.$

\end{enumerate}
\end{proof}


Now we describe the structure of the pairs that are local extrema of
the real and the imaginary part of the restricted frame potential. As we will see in Proposition~\ref{or}, under certain conditions the same structure is also valid for pairs that are local extrema of the real or the imaginary part of the restricted frame potential.
\begin{theorem}\label{labedett}
Let $\{\alpha_n\}_{n=1}^N\subset K_{\neq 0}.$ Then every pair $(\{f_m\}_{m=1}^N,\{g_m\}_{m=1}^N)$ which is a local
extrema of the real and the imaginary part of the mixed frame
potential
$\widetilde{FP}:\tilde{S}(\{\alpha_m\}_{m=1}^N)\longrightarrow K,$
can be decomposed as
$$\left(\{f_m\}_{m\in \mathcal{I}^c} \cup\{f_m\}_{m\in \mathcal{I}}~~,~~\{g_m\}_{m\in \mathcal{I}^c} \cup\{g_m\}_{m\in \mathcal{I}}\right),$$

where

\begin{enumerate}
\item[(a)] $\mathcal{I}\subseteq \{1,...,N\}$
\item[(b)] $\{f_m\}_{m\in \mathcal{I}^c}$ and $\{g_m\}_{m\in \mathcal{I}^c}$ are generalized biorthogonal sequences
\item[(c)] $\{f_m\}_{m\in \mathcal{I}}\subset \left(span\{g_m\}_{m\in \mathcal{I}^c}\right)^\perp$ and $\{g_m\}_{m\in \mathcal{I}}\subset \left({span\{f_m\}_{m\in \mathcal{I}^c}}\right)^\perp$ and $\{f_m\}_{m\in \mathcal{I}}$ and $\{g_m\}_{m\in \mathcal{I}}$  are $A$-generalized dual frames, where
    $$A=\frac{\sum_{m\in \mathcal{I}}\alpha_m}{dim\left(span\{f_m\}_{m\in \mathcal{I}}\right)}.$$
\end{enumerate}
\end{theorem}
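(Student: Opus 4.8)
The plan is to reduce the extremum hypothesis to the two structural results already established and then to separate the eigenspaces into a \emph{rigid} (biorthogonal) part and a single \emph{overcomplete} part. First, since $(\{f_m\}_{m=1}^N,\{g_m\}_{m=1}^N)$ is a local extremum of the real \emph{and} the imaginary part of $\widetilde{FP}$ on $\tilde{S}(\{\alpha_m\}_{m=1}^N)$, Proposition~\ref{lexs} shows that it is a critical pair of sequences. Applying Theorem~\ref{critautovec}, each $f_m$ is an eigenvector of $TU^\star$ and each $g_m$ an eigenvector of $UT^\star$ with conjugate eigenvalue; moreover the distinct eigenvalues $\{\lambda_j\}_{j=1}^J$ of $TU^\star$ induce a partition $\{1,\dots,N\}=\bigcup_j I_j$ (a genuine partition, since $\alpha_m\neq0$ forces $f_m,g_m\neq0$, so each $m$ lies in exactly one eigenspace) for which $\{f_m\}_{m\in I_j}$ and $\{g_m\}_{m\in I_j}$ are $\lambda_j$-generalized dual frames, with $\operatorname{span}\{f_m\}_{m\in I_j}=R_j$ and $\operatorname{span}\{g_m\}_{m\in I_j}=L_j$ whenever $\lambda_j\neq0$.

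Next I would set up the dichotomy defining $\mathcal{I}$ and $\mathcal{I}^c$. For a block with $|I_j|=\dim R_j$ the vectors $\{f_m\}_{m\in I_j}$ form a basis of $R_j$; evaluating the identity $\sum_{m\in I_j}\langle f_k,g_m\rangle f_m=\lambda_j f_k$ and comparing coordinates yields $\langle f_k,g_m\rangle=\lambda_j\delta_{km}$, whence $\alpha_k=\lambda_j$ for every $k\in I_j$ and the block is generalized biorthogonal. Call such blocks rigid, let $\mathcal{I}^c$ be their union, and let $\mathcal{I}$ be the union of the remaining overcomplete blocks. Part (b) then follows by combining the within-block biorthogonality with the cross-block relation $R_i\perp L_j$ for $i\neq j$ (used in the proof of Theorem~\ref{critautovec}), which annihilates all off-diagonal inner products between distinct rigid blocks. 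The same orthogonality gives the inclusions in (c), since the $f_m$ with $m\in\mathcal{I}$ lie in eigenspaces orthogonal to every $L_j$ indexing $\mathcal{I}^c$, and symmetrically for the $g_m$. The value of $A$ comes from a trace computation: the operator $f\mapsto\sum_{m\in\mathcal I}\langle f,g_m\rangle f_m$ agrees with $TU^\star$ on $V:=\operatorname{span}\{f_m\}_{m\in\mathcal I}$, so it acts as $\lambda\,\Id_V$, and computing its trace over an orthonormal basis of $V$ in two ways gives $\lambda\dim V=\sum_{m\in\mathcal I}\alpha_m$, i.e. $A=\lambda=\big(\sum_{m\in\mathcal I}\alpha_m\big)/\dim\operatorname{span}\{f_m\}_{m\in\mathcal I}$.

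The main obstacle is to prove that \emph{at most one} block is overcomplete, which is exactly what makes the single constant $A$ in (c) well defined: if $\mathcal{I}$ met two blocks with distinct eigenvalues $\lambda_{j_1}\neq\lambda_{j_2}$, then testing the $A$-generalized dual frame identity against a nonzero vector of $R_{j_1}$ and then of $R_{j_2}$ would force $A=\lambda_{j_1}$ and $A=\lambda_{j_2}$ simultaneously, a contradiction. To exclude this I would argue that the overcompleteness (hence linear dependence) of two blocks provides enough freedom to build a smooth path in $\tilde{S}(\{\alpha_m\}_{m=1}^N)$ that transfers eigenvalue weight between them while pinning every $\langle f_m,g_m\rangle=\alpha_m$, realizing an infinitesimal reallocation $\lambda_{j_1}\to\lambda_{j_1}+t$, $\lambda_{j_2}\to\lambda_{j_2}-t$ along the constraint $\sum_n\lambda_n=\sum_m\alpha_m$ obtained in the proof of Proposition~\ref{cotasfp}. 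Along such a path $\widetilde{FP}=\sum_n\lambda_n^2$ moves in a direction that is non-critical for both $\mathcal{R}$ and $\mathcal{I}$ unless $\lambda_{j_1}=\lambda_{j_2}$, by the strict Lagrange analysis carried out in Proposition~\ref{cotasfp}; this contradicts the pair being a local extremum of the real and of the imaginary part. Hence $\mathcal{I}$ reduces to a single overcomplete block, $A$ is well defined, and the decomposition in (a)--(c) holds. I expect the construction of this eigenvalue-transfer path --- deforming the overcomplete spanning sets inside $\tilde{S}(\{\alpha_m\}_{m=1}^N)$ to reallocate the $\lambda_j$ while fixing the diagonal inner products --- to be the delicate technical core, since it is the only place where overcompleteness is genuinely exploited and where the ``real and imaginary part'' hypothesis is converted into the required rigidity.
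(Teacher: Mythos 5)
Your reduction to the block structure is sound and runs parallel to the paper's own first steps: local extremality of both parts gives a critical pair via Proposition~\ref{lexs}, Theorem~\ref{critautovec} produces the eigenspace blocks $I_j$ carrying $\lambda_j$-generalized dual frames, a block whose vectors form a basis of $R_j$ is biorthogonal with $\alpha_k=\lambda_j$ by coordinate comparison, the cross-block orthogonality $R_i\perp L_j$ yields (b) and the inclusions in (c), and the trace computation identifies $A$. The genuine gap is exactly where you located it --- ruling out two ``overcomplete'' blocks --- but the mechanism you propose to close it cannot work, even in principle. Since $\tilde{S}(\{\alpha_m\}_{m=1}^N)$ is a product of the constraint sets $S(\alpha_m)$, each involving only the pair $(f_m,g_m)$, the critical-pair equations \eqref{cri} are precisely the statement that the full tangential gradient of $\widetilde{FP}$ (of its real part, and equivalently of its imaginary part) vanishes at your pair. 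Consequently, along \emph{every} smooth curve in $\tilde{S}(\{\alpha_m\}_{m=1}^N)$ through the pair, the first derivative of $\widetilde{FP}$ is zero at $t=0$. A curve realizing the reallocation $\lambda_{j_1}\mapsto\lambda_{j_1}+at$, $\lambda_{j_2}\mapsto\lambda_{j_2}-at$ with $a\neq 0$ would give
\begin{equation*}
\frac{d}{dt}\widetilde{FP}\Big|_{t=0}=2a\left(\lambda_{j_1}-\lambda_{j_2}\right)\neq 0,
\end{equation*}
so no such curve exists inside the constraint set: criticality kills every first-order (``non-critical direction'') argument before the extremum hypothesis is ever used. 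The ``delicate technical core'' you deferred is therefore not merely hard; as formulated, it is impossible.

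The contradiction must instead be extracted from the \emph{second} derivative, and this is what the paper's proof does. It orders the distinct eigenvalues so that $|\lambda_J|\leq|\lambda_j|$ for all $j<J$, supposes some block $I_j$ with $j<J$ is linearly dependent, with dependence coefficients $\{r_m\}_{m\in I_j}$, and builds an explicit curve $\Psi_m(t)=(\beta_m(t),\gamma_m(t))$ staying in $\tilde{S}(\{\alpha_m\}_{m=1}^N)$, which pushes the dependent block toward a pair $h_1\in R_J$, $h_2\in L_J$ normalized by $\langle h_1,h_2\rangle=\pm 1$ (sign chosen according to $Re(\lambda_J)$), with square-root damping factors preserving each $\langle\beta_m(t),\gamma_m(t)\rangle=\alpha_m$. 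The first derivative of $Re(\widetilde{FP})$ vanishes, as it must, while the second derivative equals
\begin{equation*}
4\Bigl(-\sum_{m\in I_j}|r_m|^2\,|Re(\alpha_m\lambda_j)|-|Re(\lambda_J)|\sum_{m\in I_j}|r_m|^2\,|\alpha_m|\Bigr),
\end{equation*}
which is strictly negative except in the degenerate case $Re(\lambda_J)=0$ and $Re(\alpha_{m_0}\lambda_j)=0$; only there is the hypothesis on the imaginary part invoked, by rerunning the same curve with $Im(\alpha_m\lambda_j)$ in place of $Re(\alpha_m\lambda_j)$. This contradicts local minimality of $Re(\widetilde{FP})$, so every block other than the minimal-modulus one is linearly independent, hence biorthogonal, and $\mathcal{I}=I_J$ works. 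Note two structural differences from your plan: the paper pits a dependent block against the \emph{minimal-modulus} block (it never needs two overcomplete blocks to collide), and it is the linear dependence relation itself that makes the first variation vanish while leaving a sign-definite second variation. Without a second-order computation of this type, your claim that at most one block is overcomplete --- the crux of (a)--(c) --- remains unproven.
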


\begin{proof}
Let $(\{f_m\}_{m=1}^N,\{g_m\}_{m=1}^N)$ be a local extrema of the real and the imaginary part of the mixed frame potential $\widetilde{FP}:\tilde{S}(\{\alpha_m\}_{m=1}^N)\longrightarrow K.$

\begin{enumerate}


\item We have that in particular $(\{f_m\}_{m=1}^N,\{g_m\}_{m=1}^N)$ is a critical pair of sequences, so by Theorem~\ref{critautovec}, for each $m\in\{1,...,N\},\,f_m$ is an eigenvector of $TU^\star$ and $g_m$ is an eigenvector of $UT^\star,$ and the corresponding eigenvalues are conjugates.

\medskip

\item Let $\{\lambda_j\}_{j=1}^J$ the sequence of distinct eigenvalues of $TU^*,$  where $\lambda_J$ is an eigenvalue of $TU^*$ which satisfies that
$|\lambda_J|\leq |\lambda_j|,$ for all $j<J.$

Take  $\{I_j\}_{j=1}^J$ the sequence of indexing sets given by $$I_j=\{m\in\{1,...,N\}:~~TU^\star f_m = \lambda_j f_m ~~\hbox{and}~~UT^\star g_m=\overline{\lambda_j} g_m\}.$$

By Theorem~\ref{critautovec}  $\{f_m\}_{m\in I_j}$ and $\{g_m\}_{m\in I_j}$ are $\lambda_j$-generalized dual frames for all $j=1,...,J.$

\medskip

\item We will show that $\{f_m\}_{m\in I_j}$ is linearly independent in $R_j$  for any $j<J.$ The proof that $\{g_m\}_{m\in
I_j}$ is linearly independent in $L_j$ for any $j<J$ is analogous.

Assume that $\{f_m\}_{m\in I_j}$ is not l.i. in $R_j$ for some $j=1,...,J-1.$
Then there exists a nonzero sequence of $\{r_m\}_{m\in I_j}\subset K$ such that $|r_m|\le \frac{1}{2}$ for all $m\in I_j$ and $\sum_{m\in I_j}\overline{r_m }\alpha_m f_m=0.$

We will assume without loss of generality that $(\{f_m\}_{m=1}^N,\{g_m\}_{m=1}^N)$ minimizes the real part of the mixed frame potential. The other cases can be proved in a similar way.
\begin{enumerate}[a)]
\item
If $Re(\lambda_J)<0$ we take $h_1\in R_J$ and $h_2\in L_J,$ such that $\langle h_1,h_2\rangle =1.$ Let
$m\in {1,...,N}$ and $u_m\in K$ such that $u_m^2=\alpha_m.$ We define for each $m\in {1,...,N}~~\Psi_m:(-1,1)\rightarrow S(\alpha_m),~~~~\Psi_m(t)=(\beta_m(t),\gamma_m(t))$ where
\begin{equation*}
  \beta_m(t)=\left\{
    \begin{array}{ll}
      \sqrt{1-sgn(Re(\alpha_m \lambda_j)) t^2|r_m|^2} f_m+tr_m u_m h_1, & m\in I_j; \\
      f_m,~~~~~~ & m\notin I_j
    \end{array}
  \right.
\end{equation*}
and
\begin{equation*}
  \gamma_m(t)=\left\{
    \begin{array}{ll}
      \sqrt{1-sgn(Re(\alpha_m \lambda_j))t^2|r_m|^2} g_m+tr_m u_m h_2, & m\in I_j; \\
      g_m,~~~~~~ & m\notin I_j.
    \end{array}
  \right.
\end{equation*}
We have that $\{\Psi_m(0)\}_{m=1}^N=(\{f_m\}_{m=1}^N,\{g_m\}_{m=1}^N)$ and $$Re(\widetilde{FP})(\{\Psi_m(t)\}_{m=1}^N)=Re \sum_{m=1}^N\sum_{n=1}^N\langle \beta_m(t), \gamma_n(t)\rangle \langle \beta_n(t),\gamma_m(t)\rangle.$$ By the product rule
\begin{equation*}
  \begin{aligned}
  \frac{d Re(\widetilde{FP})}{dt}(\{\Psi_m(0)\}_{m=1}^N)&=Re\sum_{m\in I_j}\sum_{n=1}^N\langle r_m u_m h_1,g_n\rangle \langle f_n, g_m\rangle +\\
  &+Re \sum_{m\in I_j}\sum_{n=1}^N\langle f_m,r_n u_n h_2\rangle \langle g_m,f_n\rangle +\\
  &+Re \sum_{m\in I_j}\sum_{n=1}^N\langle f_m,g_n\rangle \langle f_n, r_m u_m h_2 \rangle +\\
  &+Re \sum_{m\in I_j}\sum_{n=1}^N\langle f_m,g_n\rangle \langle r_n u_n h_1, g_m\rangle =\\
  & S_1+S_2+S_3+S_4.
\end{aligned}
\end{equation*}
$S_1=0$ since for $m\in I_j,$  we have that $\langle f_n,g_m\rangle =0$ for $n\notin I_j$ and  $\langle h_1,g_n\rangle =0$ for $n\in I_j$ because $h_1\in R_J.$ In a similar way we see that $S_2, S_3$ and $ S_4=0.$
Hence we obtain $$\frac{d Re(\widetilde{FP})}{dt}(\{\Psi_m(0)\}_{m=1}^N)=0.$$
\begin{equation*}
  \begin{aligned}
&\frac{d^2 Re(\widetilde{FP})}{dt^2}(\{\Psi_m(0)\}_{m=1}^N)=Re\sum_{m\in I_j} \sum_{n=1}^N\langle \beta_m^{''}(0),\gamma_n(0)\rangle \langle \beta_n(0),\gamma_m(0)\rangle ~+\\
&+~Re\sum_{m\in I_j}\sum_{n=1}^N \langle \beta_m^{'}(0),\gamma_n^{'}(0)\rangle \langle \beta_n(0),\gamma_m(0)\rangle +Re\sum_{m\in I_j}\sum_{n=1}^N \langle \beta_m^{'}(0),\gamma_n(0)\rangle \langle \beta_n(0),\gamma_m^{'}(0)\rangle ~+\\
&+~Re\sum_{m\in I_j}\sum_{n=1}^N \langle \beta_m^{'}(0),\gamma_n(0)\rangle \langle \beta_n^{'}(0),\gamma_m(0)\rangle +Re\sum_{m\in I_j}\sum_{n=1}^N \langle \beta_m^{'}(0),\gamma_n^{'}(0)\rangle \langle \beta_n(0),\gamma_m(0)\rangle ~+\\
&+~Re\sum_{m\in I_j}\sum_{n=1}^N \langle \beta_m(0),\gamma_n^{''}(0)\rangle \langle \beta_n(0), \gamma_m(0)\rangle +Re\sum_{m\in I_j}\sum_{n=1}^N \langle \beta_m(0),\gamma_n^{'}(0)\rangle \langle \beta_n(0),\gamma_m^{'}(0)\rangle ~+\\
&+~Re\sum_{m\in I_j}\sum_{n=1}^N \langle \beta_m(0),\gamma_n^{'}(0)\rangle \langle \beta_n^{'}(0),\gamma_m(0)\rangle +Re\sum_{m\in I_j}\sum_{n=1}^N \langle \beta_m^{'}(0),\gamma_n(0)\rangle \langle \beta_n(0),\gamma_m^{'}(0)\rangle ~+\\
&+~Re\sum_{m\in I_j}\sum_{n=1}^N \langle \beta_m(0),\gamma_n^{'}(0)\rangle \langle \beta_n(0), \gamma_m^{'}(0)\rangle +Re\sum_{m\in I_j}\sum_{n=1}^N \langle \beta_m(0),\gamma_n(0)\rangle \langle \beta_n(0),\gamma_m^{''}(0)\rangle ~+\\
&+~Re\sum_{m\in I_j}\sum_{n=1}^N \langle \beta_m(0),\gamma_n(0)\rangle \langle \beta_n^{'}(0),\gamma_m^{'}(0)\rangle +Re\sum_{m\in I_j}\sum_{n=1}^N \langle \beta_m^{'}(0),\gamma_n(0)\rangle \langle \beta_n^{'}(0),\gamma_m(0)\rangle ~+\\
&+~Re\sum_{m\in I_j}\sum_{n=1}^N \langle \beta_m(0),\gamma_n^{'}(0)\rangle \langle \beta_n^{'}(0),\gamma_m(0)\rangle +Re\sum_{m\in I_j}\sum_{n=1}^N \langle \beta_m(0),\gamma_n(0)\rangle \langle \beta_n^{'}(0), \gamma_m^{'}(0)\rangle ~+\\
&+~Re\sum_{m\in I_j}\sum_{n=1}^N \langle \beta_m(0),\gamma_n(0)\rangle \langle \beta_n^{''}(0), \gamma_m(0)\rangle =\sum_{i=1}^{16} S_i.
\end{aligned}
\end{equation*}

We obtain that $S_2=S_4=S_5=S_7=S_{10}=S_{12}=S_{13}=S_{15}=0$ and $S_1=S_6=S_{11}=S_{16}=-\sum_{m\in I_j}|r_m|^2
|Re(\alpha_m)\lambda_j|.$ For the rest of the sums
$S_3=S_8=S_9=S_{14}=Re(\lambda_J)\sum_{m\in I_j}|r_m|^2|\alpha_m| .$
Finally
$$\frac{d^2 Re (\widetilde{FP})}{dt^2}(\{\Psi_m(0)\}_{m=1}^N)=4\left(-\sum_{m\in I_j}|r_m|^2 |Re(\alpha_m \lambda_j)|+
Re(\lambda_J)\sum_{m\in I_j}|r_m|^2|\alpha_m| \right),$$ thus
$\frac{d^2 Re (\widetilde{FP})}{dt^2}(\{\Psi_m(0)\}_{m=1}^N)<0,$
since the sequence $\{r_m\}_{m=1}^N$ is nonzero by assumption.

So in $t=0$ there is a maximum of $Re(\widetilde{FP})$ restricted to $\{\Psi_m(t)\}_{m=1}^N,$ i.e. we have that  for all $t\in (-1,1)$
$$Re(\widetilde{FP})(\{\Psi_m(t)\}_{m=1}^N)< Re(\widetilde{FP})(\{\Psi_m(0)\}_{m=1}^N)=Re(\widetilde{FP})((\{f_m\}_{m=1}^N,\{g_m\}_{m=1}^N))$$
which is a  contradiction since we assumed that $(\{f_m\}_{m=1}^N,\{g_m\}_{m=1}^N)$ is a local minimizer of $Re(\widetilde{FP}).$

\item If $Re(\lambda_J)\ge 0$ we use the same function $\Psi_m(t)=(\beta_m(t),\gamma_m(t))$
as defined in a), but choose $h_1\in R_J,~h_2\in L_J$ such
that $\langle h_1,h_2\rangle =-1.$ Analogously as in a) we obtain
$$\frac{d Re(\widetilde{FP})}{dt}(\{\Psi_m(0)\}_{m=1}^N)=0.$$
For the second derivative we have
$$\frac{d^2 Re(\widetilde{FP})}{dt^2}(\{\Psi_m(0)\}_{m=1}^N)=4\left( \sum_{m\in I_j}-|r_m|^2|Re(\alpha_m\lambda_j)|-Re(\lambda_J)\sum_{m\in I_j}|r_m|^2|\alpha_m|
 \right).$$

\medskip

If $Re(\lambda_J)> 0,$ we know that $\frac{d^2 Re(\widetilde{FP})}{dt^2}(\{\Psi_m(0)\}_{m=1}^N)<0.$

So in $t=0$ there is also a maximum of $Re(\widetilde{FP})$
restricted to $\{\Psi_m(t)\}_{m=1}^N,$ which is a  again a
contradiction since $(\{f_m\}_{m=1}^N,\{g_m\}_{m=1}^N)$ is a local
minimizer of $Re(\widetilde{FP}).$

\medskip

Now consider $Re(\lambda_J)=0.$ Let  $m_0\in I_j$ such that
$r_{m_0}\neq 0.$  If $Re(\alpha_{m_0} \lambda_j)\neq 0,$ then
$\frac{d^2 Re(\widetilde{FP})}{dt^2}(\{\Psi_m(0)\}_{m=1}^N)<0$ and
we are done.

If $Re(\alpha_{m_0} \lambda_j)= 0,$ we are in the only case where we use the hypothesis that we also have a local extrema
in the imaginary part of the restricted mixed frame potential.

Observe that if  $j<J$ then $\lambda_j\neq 0,$ since
$|\lambda_j|\geq |\lambda_J|$ and $\lambda_j\neq \lambda_J.$ Also,
we have that $\alpha_{m_0}\neq 0.$  So $\alpha_{m_0} \lambda_j\neq
0,$ thus we know that if $Re(\alpha_{m_0} \lambda_j)=0,$ necessarily
$Im(\alpha_{m_0}) \lambda_j\neq 0.$ Using the same curve
$\{\Psi_m(t)\}_{m=1}^N,$ but replacing in the definition
$Re(\alpha_{m} \lambda_j)$ by $Im(\alpha_{m} \lambda_j)$ in case
there is a minimum in the imaginary part, and by $-Im(\alpha_{m}
\lambda_j)$ in case there is a maximum, we also arrive to a
contradiction for this particular case.

Hence we can conclude that $\{f_m\}_{m\in I_j}$ is linearly independent in $R_j.$

\end{enumerate}

\medskip

\item

As we observed before, if  $j<J$ then $\lambda_j\neq 0.$ Let $w_j$ be such that $w_j^2=\lambda_j.$ We will prove that $\{\frac{1}{w_j}f_n\}_{n\in I_j}$ and $\{\frac{1}{\overline{w_j}}g_n\}_{n\in I_j}$ are biorthogonal sequences for $j<J:$\\
By item (3) we have that $\{\frac{1}{w_j}f_n\}_{n\in I_j}$ is l.i. in $R_j$
and $\{\frac{1}{\overline{w_j}}g_n\}_{n\in I_j}$ is l.i in $L_j.$ In item (2) we showed that for all $f\in R_j,$
$$\sum_{m\in I_j}\langle f,g_m\rangle f_m=\lambda_j f,$$ so
$$\sum_{m\in I_j}\langle f,\frac{g_m}{\overline{w_j}}\rangle \frac{f_m}{w_j}=f.$$
We also proved that for $f\in L_j,$
$$\sum_{m\in I_j}\langle f,f_m\rangle g_m=\lambda_j f,$$
so $\{\frac{1}{w_j}f_n\}_{n\in I_j}$ and $\{\frac{1}{\overline{w_j}}g_n\}_{n\in I_j}$  are a basis of $R_j$ and $L_j$ respectively. Hence for $l\in I_j$ we have $f_l\in R_j$ and $g_l\in L_j$ and
\begin{equation*}
\begin{aligned}
  0&=\sum_{m\in I_j}\langle f_l,\frac{g_m}{\overline{w_j}}\rangle \frac{f_m}{w_j}-f_l\\
   &=\left(\langle \frac{f_l}{w_j},\frac{g_l}{\overline{w_j}}\rangle -1\right)f_l+ \sum_{m\in I_j,m\neq l}\langle \frac{f_l}{w_j},\frac{g_m}{\overline{w_j}}\rangle f_m.
\end{aligned}
\end{equation*}
Since $\{\frac{1}{w_j}f_n\}_{n\in I_j}$ is a basis in $R_j$ it follows that $\langle \frac{f_l}{w_j},\frac{g_m}{\overline{w_j}}\rangle =0$ for any $l \in I_j,$ $l\neq m,~~m, $ and $\langle \frac{f_l}{w_j},\frac{g_l}{\overline{w_j}}\rangle =1$ and so we obtain the result.

\medskip

Observe that in particular we saw that if $m\in I_j,\,j<J$ we have that $\alpha_m=\lambda_j.$

\medskip

\item
By item (2) we have that for all $f\in L_J$
$$\sum_{j\in I_J}\langle f,f_j\rangle g_j=\lambda_J f.$$
Let $\{e_n\}_{n=1}^{dim L_J}$ be an orthonormal basis in $L_J.$ Then
\begin{equation*}
  \begin{aligned}
  \frac{1}{dim L_J}\sum_{m\in I_J}\alpha_m&=\frac{1}{dim L_J}\sum_{m\in I_J} \langle f_m,g_m\rangle \\
   &=\frac{1}{dim L_J}\sum_{m\in I_J}\langle f_m,\sum_{j=1}^{dim L_J}\langle g_m, e_j\rangle e_j\rangle \\
   &=\frac{1}{dim L_J}\sum_{m\in I_J}\sum_{j=1}^{dim L_J}\langle f_m,e_j\rangle \langle g_m, e_j\rangle \\
   &=\sum_{j=1}^{dim L_J}\frac{1}{dim L_J}\langle \sum_{m\in I_J}\langle f_m,e_j\rangle g_m, e_j\rangle \\
   &=\sum_{j=1}^{dim L_J}\frac{1}{dim L_J}\lambda_J\langle e_j,e_j\rangle =\lambda_J.
\end{aligned}
\end{equation*}
Similarly, we obtain
$$\frac{1}{dim R_J}\sum_{m\in I_J}\alpha_m=\lambda_J.$$

\medskip

Finally, we obtain the decomposition

$$\{f_m\}_{m=1}^N=\{f_m\}_{m\in I_{J^c}}\cup \{f_m\}_{m\in I_J}^N$$
and
$$\{g_m\}_{m=1}^N=\{g_m\}_{m\in I_{J^c}}\cup \{g_m\}_{m\in I_J}.$$

By item (4) we have that $\{f_m\}_{m\in I_{J^c}}$ and $\{g_m\}_{m\in I_{J^c}}$ are generalized biorthogonal sequences. From item (2) and (5) it follows that $\{f_m\}_{m\in I_J}$ and $\{g_m\}_{m\in I_J}$ are $\lambda_J$-generalized dual frames where $\lambda_J=\frac{1}{dim L_J}\sum_{m\in I_J}\alpha_m.$ So, setting $I=I_J,$ we have the desired result.
\end{enumerate}
\end{proof}

As mentioned before, under some additional hypothesis we can assure the same structure for a pair that is a local extrema of the real or the imaginary part of the restricted frame potential:

\begin{proposition}\label{or}
 Let $\{\alpha_n\}_{n=1}^N\subset K_{\neq 0}$ and $(\{f_m\}_{m=1}^N,\{g_m\}_{m=1}^N)\in \tilde{S}(\{\alpha_m\}_{m=1}^N)$ such that  $TU^*$ is injective.
 \begin{enumerate}
\item If there exists an eigenvalue $\lambda_J$ of $TU^*$  such that $Re(\lambda_J)\neq 0,$ the decomposition of Theorem~\ref{labedett} can be obtained assuming only that $(\{f_m\}_{m=1}^N,\{g_m\}_{m=1}^N)$ is a local extrema of the real part of $\widetilde{FP}:\tilde{S}(\{\alpha_m\}_{m=1}^N)\longrightarrow K.$
\item If there exists an eigenvalue $\lambda_J$ of $TU^*$ such that $Im(\lambda_J)\neq 0$, the decomposition of Theorem~\ref{labedett} can be obtained assuming only that $(\{f_m\}_{m=1}^N,\{g_m\}_{m=1}^N)$ is a local extrema of the imaginary part of $\widetilde{FP}:\tilde{S}(\{\alpha_m\}_{m=1}^N)\longrightarrow K.$
\end{enumerate}
\end{proposition}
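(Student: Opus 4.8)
The plan is to re-run the proof of Theorem~\ref{labedett} almost verbatim, changing only the two places where that argument relied on the minimal-modulus choice of $\lambda_J$ together with the assumption of being an extremum of \emph{both} parts. Recall that the full two-part hypothesis was invoked in exactly one subcase of step (3): when $Re(\lambda_J)=0$ and $Re(\alpha_{m_0}\lambda_j)=0$ simultaneously, the second derivative of $Re(\widetilde{FP})$ along the test curve degenerated and one was forced to pass to $Im(\widetilde{FP})$. The observation driving the proposition is that if the pivot eigenvalue is chosen with $Re(\lambda_J)\neq 0$ (resp. $Im(\lambda_J)\neq 0$), this degenerate subcase never arises.

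First I would fix $\lambda_J$ to be the eigenvalue supplied by the hypothesis, so that $Re(\lambda_J)\neq 0$ in part (1) or $Im(\lambda_J)\neq 0$ in part (2), \emph{abandoning the requirement that $\lambda_J$ be of minimal modulus}. One must then check that minimal modulus is not needed anywhere else. It entered the original argument only through step (4), to guarantee $\lambda_j\neq 0$ for the non-pivot indices $j$ so that the square roots $w_j$ with $w_j^2=\lambda_j$ exist; this is now supplied directly by injectivity of $TU^\star$, which forces every eigenvalue to be nonzero. Consequently steps (1), (2), (4), (5) carry over unchanged for the new pivot: for each non-pivot $j$ the sequences $\{f_m\}_{m\in I_j}$, $\{g_m\}_{m\in I_j}$ remain biorthogonal, while $\{f_m\}_{m\in I_J}$, $\{g_m\}_{m\in I_J}$ form $\lambda_J$-generalized dual frames with $\lambda_J=(\sum_{m\in I_J}\alpha_m)/\dim R_J$.

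For part (1) the linear-independence step (3) then needs only the real part. With $Re(\lambda_J)\neq 0$ one lands in case a) ($Re(\lambda_J)<0$, using $\langle h_1,h_2\rangle=1$) or the strict portion of case b) ($Re(\lambda_J)>0$, using $\langle h_1,h_2\rangle=-1$), and in both the computed second derivative $4(-\sum_{m\in I_j}|r_m|^2|Re(\alpha_m\lambda_j)|\pm Re(\lambda_J)\sum_{m\in I_j}|r_m|^2|\alpha_m|)$ is strictly negative for the nonzero sequence $\{r_m\}$, contradicting a local minimum of $Re(\widetilde{FP})$ (the maximum case being symmetric). Crucially, the controlling term here is the pivot's $Re(\lambda_J)$, not any $Re(\lambda_j)$, so even when $Re(\alpha_m\lambda_j)=0$ for every relevant $m$ the expression reduces to $\pm 4\,Re(\lambda_J)\sum_{m\in I_j}|r_m|^2|\alpha_m|\neq 0$ and the contradiction persists for every non-pivot $j$ at once. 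Hence $\{f_m\}_{m\in I_j}$ is l.i.\ in $R_j$ for all $j\neq J$ using the real-part hypothesis alone, and the decomposition follows exactly as before.

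For part (2) I would run the symmetric construction already indicated at the end of step (3): replace $Re(\alpha_m\lambda_j)$ by $Im(\alpha_m\lambda_j)$ (and by $-Im(\alpha_m\lambda_j)$ in the maximum case) in the definition of the test curves and differentiate $Im(\widetilde{FP})$ instead. The resulting second derivative takes the form $4(-\sum_{m\in I_j}|r_m|^2|Im(\alpha_m\lambda_j)| + \langle h_1,h_2\rangle\,Im(\lambda_J)\sum_{m\in I_j}|r_m|^2|\alpha_m|)$, and since $Im(\lambda_J)\neq 0$ the sign $\langle h_1,h_2\rangle\in\{1,-1\}$ can be fixed so that the second term is strictly negative; both terms are then nonpositive with strict negativity, contradicting a local minimum of $Im(\widetilde{FP})$. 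I expect this sign bookkeeping for the imaginary-part functional to be the only genuine obstacle: one must verify that the cross terms producing $Im(\lambda_J)$ behave exactly as the $Re(\lambda_J)$ terms did, so that a single global choice of $\langle h_1,h_2\rangle$ defeats the assumed extremum for every non-pivot $j$ simultaneously. Once that is confirmed, injectivity again secures step (4), and the decomposition of Theorem~\ref{labedett} is obtained verbatim.
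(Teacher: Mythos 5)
Your proposal is correct and follows essentially the same route as the paper: the paper's own (much terser) proof likewise re-runs Theorem~\ref{labedett} with $I=I_J$ taken for the hypothesis-given eigenvalue $\lambda_J$ (no longer of minimal modulus), observes that the degenerate subcase of item (3) requiring both the real- and imaginary-part hypotheses cannot occur when $Re(\lambda_J)\neq 0$ (resp.\ $Im(\lambda_J)\neq 0$), and relies on injectivity of $TU^\star$ to guarantee $\lambda_j\neq 0$ for the non-pivot eigenvalues. Your writeup actually spells out more of the verification (in particular the sign bookkeeping for the imaginary-part functional) than the paper does, but the underlying argument is identical.
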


\begin{proof}
In each case the proof is the same as the proof of
Theorem~\ref{labedett}, except that we set $$I=I_J=\{m~~:~~TU^\star
f_m = \lambda_J f_m ~~\hbox{and}~~UT^\star g_m=\overline{\lambda_J}
g_m\}$$ associated to $\lambda_J$
(which now not necessarily satisfies $|\lambda_J|\leq |\lambda_j|$
for all $j<J).$ The result follows from the observations in item (3)
of the proof of Theorem~\ref{labedett}.
\end{proof}

We finally obtain the following result concerning dual frames with prescribed scalar porducts.

\begin{corollary}
Let $\{\alpha_m\}_{m=1}^N\subset K.$ Then the following statements are equivalent:
\begin{enumerate}
\item There exists $(\{f_m\}_{m=1}^N,\{g_m\}_{m=1}^N)\in \tilde{S}(\{\alpha_m\}_{m=1}^N)$ which is a pair of dual frames.
\item There exists $(\{f_m\}_{m=1}^N,\{g_m\}_{m=1}^N)$  in $\tilde{S}(\{\alpha_m\}_{m=1}^N)$ such that $TU^*$ has only real eigenvalues,  $\widetilde{FP}(\{f_m\}_{m=1}^N,\{g_m\}_{m=1}^N)=d$ and $Re(\sum_{m=1}^N \alpha_m)\geq d.$

\end{enumerate}

\end{corollary}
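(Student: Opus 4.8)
The plan is to prove the two implications separately, using Lemma~\ref{FramePotAutov} and Proposition~\ref{cotasfp} to pin down the eigenvalues of $TU^\star$, and then a construction for the harder direction. For $(1)\Rightarrow(2)$ there is nothing to construct: if $(\{f_m\}_{m=1}^N,\{g_m\}_{m=1}^N)\in\tilde{S}(\{\alpha_m\}_{m=1}^N)$ is a pair of dual frames, then by the remark following \eqref{dual_1} we have $TU^\star=\Id$. Hence every eigenvalue of $TU^\star$ equals $1$, which is real; by Lemma~\ref{FramePotAutov}, $\widetilde{FP}=Tr((TU^\star)^2)=Tr(\Id)=d$; and by the trace computation carried out in the proof of Proposition~\ref{cotasfp} we get $\sum_{m=1}^N\alpha_m=Tr(TU^\star)=d$, so $Re\left(\sum_{m=1}^N\alpha_m\right)=d\ge d$. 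Thus the very same pair witnesses $(2)$.

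For $(2)\Rightarrow(1)$ I would first reduce the three hypotheses to a single scalar identity. Since all eigenvalues of $TU^\star$ are real, Proposition~\ref{cotasfp}(1) tells us that $\sum_{m=1}^N\alpha_m$ is real and that $d=\widetilde{FP}\ge\frac1d\left(\sum_{m=1}^N\alpha_m\right)^2$, whence $\left(\sum_{m=1}^N\alpha_m\right)^2\le d^2$. Combined with $Re\left(\sum_{m=1}^N\alpha_m\right)=\sum_{m=1}^N\alpha_m\ge d>0$ this forces $\sum_{m=1}^N\alpha_m=d$. Equivalently, by Lemma~\ref{FramePotAutov} and the equality case of the Cauchy--Schwarz inequality $\left(\sum_n\lambda_n\right)^2\le d\sum_n\lambda_n^2$, every eigenvalue of $TU^\star$ must equal $1$; in particular $TU^\star$ is invertible, so $T$ is onto $\mathbb{H}_d$ and therefore $N\ge d$.

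It then remains to exhibit \emph{some} dual pair realizing the prescribed diagonal, namely vectors with $\langle f_m,g_m\rangle=\alpha_m$ and $TU^\star=\sum_{m=1}^N f_m g_m^\star=\Id$, where $f_m g_m^\star$ denotes the rank-one operator $h\mapsto f_m\langle h,g_m\rangle$. Writing $P_m=f_m g_m^\star$, this is exactly the problem of decomposing the identity as a sum of $N$ operators of rank at most one with prescribed traces, $\Id=\sum_{m=1}^N P_m$ with $Tr(P_m)=\langle f_m,g_m\rangle=\alpha_m$; the compatibility condition $\sum_m Tr(P_m)=Tr(\Id)=d$ is precisely $\sum_{m=1}^N\alpha_m=d$, which we have just secured. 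I would construct such a decomposition by deflation: fixing an orthonormal basis $\{e_k\}_{k=1}^d$, I split off rank-one terms one (or several) at a time, reducing the remainder to lower-dimensional blocks, and once the remainder is one-dimensional I absorb all leftover indices there, using that in dimension one any scalars with the correct sum are trivially realizable (the case $d=1$, where $f_m=\alpha_m$, $g_m=1$, works directly). Factoring each $P_m$ as $f_m g_m^\star$ with $\langle f_m,g_m\rangle=\alpha_m$ then yields the pair, and $TU^\star=\Id$ guarantees it is a dual pair of frames.

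The main obstacle will be this final construction. The delicate points are matching all $N$ prescribed traces simultaneously when the $\alpha_m$ are complex or vanishing, while keeping $\sum_m P_m=\Id$ and ensuring that the resulting $\{f_m\}_{m=1}^N$ and $\{g_m\}_{m=1}^N$ genuinely span $\mathbb{H}_d$. Careful bookkeeping of how many indices are assigned to each dimension during the deflation is needed so that $N\ge d$ is used correctly (the borderline case $N=d$, where a frame is forced to be a basis and each $\alpha_m$ is constrained, must be treated separately), and indices with $\alpha_m=0$ require factoring the corresponding rank-one term through orthogonal vectors $f_m\perp g_m$.
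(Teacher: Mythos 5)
Your $(1)\Rightarrow(2)$ argument and the first half of your $(2)\Rightarrow(1)$ argument --- deducing that $\sum_{m=1}^N\alpha_m=d$ and, from the equality case of Cauchy--Schwarz applied to the eigenvalues, that every eigenvalue of $TU^\star$ equals $1$ --- are correct and agree with the paper. From there you depart from the paper's route: the paper argues that the \emph{same} pair witnessing (2) is already a dual pair (it attains the lower bound of Proposition~\ref{cotasfp}, is asserted to therefore be a local minimum, and item (5) of the proof of Theorem~\ref{labedett} then forces $\dim L_J=d$, i.e. $TU^\star=\Id$), whereas you throw that pair away and propose to \emph{construct} a fresh dual pair with prescribed diagonal $\langle f_m,g_m\rangle=\alpha_m$ out of the single scalar identity $\sum_{m=1}^N\alpha_m=d$. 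That existence problem is exactly Corollary~3.7 of \cite{CPX12}, which the paper invokes only in the closing remark and only under the hypothesis $N>d$; in your proposal it is precisely the part that is missing. You yourself label the deflation ``the main obstacle,'' and the delicate points you list (zero or complex $\alpha_m$, making the constructed vectors span, the case $N=d$) are never resolved. As written, the proposal is a reduction plus a plan, not a proof.

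Moreover, the unresolved case $N=d$ is not a technicality that can be ``treated separately'': there the object you want to build simply does not exist. If $N=d$ and $TU^\star=\Id$, then $T$ and $U$ are square and $U^\star=T^{-1}$, so $\langle f_n,g_m\rangle=\delta_{nm}$; every dual pair of length $N=d$ is a biorthogonal pair of bases and necessarily has all $\alpha_m=1$. Hypothesis (2), however, does not exclude $N=d$ with non-constant $\alpha_m$: take $d=N=2$, $\alpha_1=0$, $\alpha_2=2$, and $f_1=(0,1)$, $g_1=(-1,0)$, $f_2=g_2=(1,1)$. Then $TU^\star=\begin{pmatrix}1&1\\0&1\end{pmatrix}$ has $1$ as its only eigenvalue (real), $\widetilde{FP}=Tr((TU^\star)^2)=2=d$, and $Re\left(\sum_{m}\alpha_m\right)=2\geq d$, so (2) holds; yet $\tilde{S}(\{0,2\})$ contains no dual pair. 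Hence your strategy cannot be completed when $N=d$ --- and in fact this example shows that the implication $(2)\Rightarrow(1)$ is false without some assumption such as $N>d$, so no completion is possible. (For what it is worth, the paper's own proof slips at the corresponding point: ``the pair is then also a local minima'' does not follow, since the bound of Proposition~\ref{cotasfp} is taken only over pairs whose mixed frame operator has real eigenvalues; the Jordan pair above attains the bound without even being a critical pair, by Proposition~\ref{lexs}.) Your construction route does succeed when $N>d$ --- that is the content of \cite{CPX12} --- so what your argument can honestly deliver is the corollary under the extra hypothesis $N>d$, not the corollary as stated.
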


\begin{proof}

$(1)\Rightarrow (2)$

 Assume $(\{f_m\}_{m=1}^N,\{g_m\}_{m=1}^N)\in \tilde{S}(\{\alpha_m\}_{m=1}^N)$ is a pair of dual frames. Then $TU^*= \Id$ and so $1$ is the only eigenvalue of $TU^*,$ which implies that $\widetilde{FP}(\{f_m\}_{m=1}^N,\{g_m\}_{m=1}^N)=d.$ By Proposition~\ref{cotas} we have $\sum_{m=1}^N \alpha_m= d,$ hence in particular $Re(\sum_{m=1}^N \alpha_m)\geq d.$

\medskip

$(2)\Rightarrow (1)$

Take $(\{f_m\}_{m=1}^N,\{g_m\}_{m=1}^N)\in \tilde{S}(\{\alpha_m\}_{m=1}^N)$ such that $TU^*$ has only real eigenvalues,  $\widetilde{FP}(\{f_m\}_{m=1}^N,\{g_m\}_{m=1}^N)=d$ and $Re(\sum_{m=1}^N \alpha_m)\geq d.$

Since all the eigenvalues of $TU^*$ are real, by Proposition~\ref{cotasfp} we have that

$\widetilde{FP}(\{f_m\}_{m=1}^N,\{g_m\}_{m=1}^N)$ and $\sum_{m=1}^N \alpha_m$ are real and $\widetilde{FP}(\{f_m\}_{m=1}^N,\{g_m\}_{m=1}^N)=d \ge \frac{1}{d} \left(\sum_{m=1}^N\alpha_m\right)^2.$ Since $\sum_{m=1}^N\alpha_m=Re(\sum_{m=1}^N \alpha_m)\geq d,$  we obtain $d=\frac{1}{d} \left(\sum_{m=1}^N\alpha_m\right)^2,$ and so $(\{f_m\}_{m=1}^N,\{g_m\}_{m=1}^N)$ attains the lower bound of the restricted frame potential. Hence, as we could see in the proof of Proposition~\ref{cotasfp},  $TU^*$ has only one eigenvalue equal to $\frac{1}{d} \sum_{m=1}^N\alpha_m=1$

On the other hand, $(\{f_m\}_{m=1}^N,\{g_m\}_{m=1}^N)$ is then also a local minima. So, by item (5) of the proof of Theorem~\ref{labedett}, $\frac{1}{d} \sum_{m=1}^N\alpha_m=\frac{1}{dim L_J} \sum_{m=1}^N\alpha_m,$  which says that
$dim L_J=d,$ i.e. $(\{f_m\}_{m=1}^N,\{g_m\}_{m=1}^N)$ is a dual frame.
\end{proof}

\begin{remark}
If we assume $N>d,$ the statements in the previous corollary are also equivalent to say that $\sum_{m=1}^N \alpha_m= d.$ This is a consequence of Proposition~\ref{cotas} and of Corollary 3.7 in \cite{CPX12}.
\end{remark}


\vspace{13pt}

\centerline{ACKNOWLEDGMENT}

The authors want to thank Ole Christensen for useful discussions concerning this paper.

S. Heineken acknowledges the support of the Intra-European Marie
Curie Fellowship (FP7 project PIEF-GA-2008-221090), UBACyT 2011-2014
(UBA) and PICT 2011-0436 (ANPCyT).

The research of I. Carrizo was supported by the EUCETIFA project of the University of Vienna, CONICET, Universidad Nacional de San Luis and the Technical University of Denmark.


\bibliographystyle{amsalpha}


\end{document}